\def\th@plain{%
  \itshape 
}
\renewenvironment{proof}[1][\proofname]{\par
  \pushQED{\qed}%
  \normalfont \topsep6\p@\@plus6\p@\relax
  \trivlist
  \item[\hskip\labelsep
        \bfseries
    #1\@addpunct{.}]\ignorespaces
}{%
  \popQED\endtrivlist\@endpefalse
}
\numberwithin{equation}{section}
\newtheorem{thm}{Theorem}[section]
\newtheorem{cor}[thm]{Corollary}
\newtheorem{claim}[thm]{Claim}
\newtheorem{lemma}[thm]{Lemma}
\newtheorem{prop}{Proposition}
\newtheorem{conj}[thm]{Conjecture}
\numberwithin{equation}{section}
\definecolor{RED}{rgb}{1,0,0}\definecolor{BLUE}{rgb}{0,0,1} 
\providecommand{\DIFaddbegin}{} 
\providecommand{\DIFaddend}{} 
\providecommand{\DIFdelbegin}{} 
\providecommand{\DIFdelend}{} 
\providecommand{\DIFaddbeginFL}{} 
\providecommand{\DIFaddendFL}{} 
\providecommand{\DIFdelbeginFL}{} 
\providecommand{\DIFdelendFL}{} 
\newcommand{\DIFscaledelfig}{0.5}
\newsavebox{\DIFdelgraphicsbox} 
\newlength{\DIFdelgraphicswidth} 
\newlength{\DIFdelgraphicsheight} 
\LetLtxMacro{\DIFOincludegraphics}{\includegraphics} 
\newcommand{\DIFaddincludegraphics}[2][]{{\color{blue}\fbox{\DIFOincludegraphics[#1]{#2}}}} 
\newcommand{\DIFdelincludegraphics}[2][]{
\sbox{\DIFdelgraphicsbox}{\DIFOincludegraphics[#1]{#2}}
\settoboxwidth{\DIFdelgraphicswidth}{\DIFdelgraphicsbox} 
\settoboxtotalheight{\DIFdelgraphicsheight}{\DIFdelgraphicsbox} 
\scalebox{\DIFscaledelfig}{
\parbox[b]{\DIFdelgraphicswidth}{\usebox{\DIFdelgraphicsbox}\\[-\baselineskip] \rule{\DIFdelgraphicswidth}{0em}}\llap{\resizebox{\DIFdelgraphicswidth}{\DIFdelgraphicsheight}{
\setlength{\unitlength}{\DIFdelgraphicswidth}
\begin{picture}(1,1)
\thicklines\linethickness{2pt} 
{\color[rgb]{1,0,0}\put(0,0){\framebox(1,1){}}}
{\color[rgb]{1,0,0}\put(0,0){\line( 1,1){1}}}
{\color[rgb]{1,0,0}\put(0,1){\line(1,-1){1}}}
\end{picture}
}\hspace*{3pt}}} 
} 
\LetLtxMacro{\DIFOaddbegin}{\DIFaddbegin} 
\LetLtxMacro{\DIFOaddend}{\DIFaddend} 
\LetLtxMacro{\DIFOdelbegin}{\DIFdelbegin} 
\LetLtxMacro{\DIFOdelend}{\DIFdelend} 
\DeclareRobustCommand{\DIFaddbegin}{\DIFOaddbegin \let\includegraphics\DIFaddincludegraphics} 
\DeclareRobustCommand{\DIFaddend}{\DIFOaddend \let\includegraphics\DIFOincludegraphics} 
\DeclareRobustCommand{\DIFdelbegin}{\DIFOdelbegin \let\includegraphics\DIFdelincludegraphics} 
\DeclareRobustCommand{\DIFdelend}{\DIFOaddend \let\includegraphics\DIFOincludegraphics} 
\LetLtxMacro{\DIFOaddbeginFL}{\DIFaddbeginFL} 
\LetLtxMacro{\DIFOaddendFL}{\DIFaddendFL} 
\LetLtxMacro{\DIFOdelbeginFL}{\DIFdelbeginFL} 
\LetLtxMacro{\DIFOdelendFL}{\DIFdelendFL} 
\DeclareRobustCommand{\DIFaddbeginFL}{\DIFOaddbeginFL \let\includegraphics\DIFaddincludegraphics} 
\DeclareRobustCommand{\DIFaddendFL}{\DIFOaddendFL \let\includegraphics\DIFOincludegraphics} 
\DeclareRobustCommand{\DIFdelbeginFL}{\DIFOdelbeginFL \let\includegraphics\DIFdelincludegraphics} 
\DeclareRobustCommand{\DIFdelendFL}{\DIFOaddendFL \let\includegraphics\DIFOincludegraphics} 
\begin{document}
\title{\LARGE  
Every subcubic graph is packing $(1,1,2,2,3)$-colorable
}

\author{
Xujun Liu\thanks{Department of Foundational Mathematics, Xi'an Jiaotong-Liverpool University, Suzhou, Jiangsu Province, 215123, China, xujun.liu@xjtlu.edu.cn; the research of X. Liu was supported by the Natural Science Foundation of the Jiangsu Higher Education Institutions of China (Grant No. 22KJB110025) and the Research Development Fund RDF-21-02-066 of Xi'an Jiaotong-Liverpool University.} \and
Xin Zhang\thanks{School of Mathematics and Statistics, Xidian University, Xi'an, Shaanxi Province, 710126, China, xzhang@xidian.edu.cn;
the research of X. Zhang was
supported by the Natural Science Basic Research Program of Shaanxi Province (Nos.\,2023-JC-YB-001, 2023-JC-YB-054).} \and 
Yanting Zhang\thanks{Mathematical Institute, University of Oxford, Oxford, OX2 6GG, United Kingdom, yanting.zhang@maths.ox.ac.uk; the work was partially done when Y. Zhang was an undergraduate student at Xi'an Jiaotong-Liverpool University, Suzhou, China.}
}


\maketitle

\begin{abstract}
\baselineskip 0.60cm

For a sequence $S=(s_1, \ldots, s_k)$ of non-decreasing integers, a packing $S$-coloring of a graph $G$ is a partition of its vertex set $V(G)$ into $V_1, \ldots, V_k$ such that for every pair of distinct vertices $u,v \in V_i$, where $1 \le i \le k$, the distance between $u$ and $v$ is at least $s_i+1$. The packing chromatic number, $\chi_p(G)$, of a graph $G$ is the smallest integer $k$ such that $G$ has a packing~$(1,2, \ldots, k)$-coloring. Gastineau and Togni asked an open question ``Is it true that the $1$-subdivision ($D(G)$) of any subcubic graph $G$ has packing chromatic number at most $5$?'' and later Bre\v sar, Klav\v zar, Rall, and Wash conjectured that it is true. 

In this paper, we prove that every subcubic graph has a packing $(1,1,2,2,3)$-coloring and it is sharp due to the existence of subcubic graphs that are not packing $(1,1,2,2)$-colorable. As a corollary of our result, $\chi_p(D(G)) \le 6$ for every subcubic graph $G$, improving a previous bound ($8$) due to Balogh, Kostochka, and Liu in 2019, and we are now just one step away from fully solving the conjecture. 

\vspace{3mm}\noindent \emph{Keywords}: packing colorings; packing chromatic number; subcubic graphs; $1$-subdivision 
\end{abstract}

\baselineskip 0.60cm

\section{Introduction}
For a sequence $S=(s_1, \ldots, s_k)$ of non-decreasing integers, a packing $S$-coloring of a graph $G$ is a partition of its vertex set $V(G)$ into $V_1, \ldots, V_k$ such that for every pair of distinct vertices $u,v \in V_i$, where $1 \le i \le k$, the distance between $u$ and $v$ is at least $s_i+1$. The packing chromatic number, $\chi_p(G)$, of a graph $G$ is defined to be the smallest integer $k$ such that $G$ has a packing~$(1,2, \ldots, k)$-coloring. The concept of packing $S$-coloring was first introduced by Goddard and Xu~\cite{GX1} and is now a very popular topic in graph coloring. Moreover, its edge counterpart was recently studied by Gastineau and Togni~\cite{GT1}, Hocquard, Lajou, and Lu\v zar~\cite{HLL1}, Liu, Santana, and Short~\cite{LSS1}, as well as Liu and Yu~\cite{LY1}.

The notion of packing chromatic number was introduced by Goddard, Hedetniemi, Hedetniemi, Harris, and Rall~\cite{GHHHR1} in 2008 under the name broadcast chromatic number, and it was motivated by a frequency assignment problem in broadcast networks. The concept has drawn the attention of many researchers recently (e.g., see~\cite{BKL1,BKL2,BF1,BKRW1,BKRW2,FKL1,GT2,GHHHR1, KL1, LLRY1,MT1}). The question whether every cubic graph has a bounded packing chromatic number was first asked by Goddard et al~\cite{GHHHR1} and discussed in many papers (e.g., see~\cite{BKL1,BKRW1,BKRW2,GT2}). The $1$-subdivision of a graph $G$, denoted by $D(G)$, is obtained from $G$ by replacing every edge with a path of two edges. Gastineau and Togni~\cite{GT2} asked the open question whether it is true that the subdivision of any subcubic graph is packing $(1,2,3,4,5)$-colorable
and later Bre\v sar, Klav\v zar, Rall, and Wash~\cite{BKRW2} conjectured it is true.

\begin{conj}[Bre\v sar, Klav\v zar, Rall, and Wash~\cite{BKRW2}]\label{conj1}
The $1$-subdivision of every subcubic graph is packing $(1,2,3,4,5)$-colorable.    
\end{conj}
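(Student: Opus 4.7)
The plan is to attack Conjecture~\ref{conj1} by a minimum-counterexample argument combined with reducible configurations and discharging, the standard toolkit for packing-coloring results on subcubic graphs. First I would translate a packing $(1,2,3,4,5)$-coloring of $D(G)$ into a joint coloring of the vertices and edges of $G$: because distances in $D(G)$ between two original vertices of $G$ equal twice their distance in $G$, and distances involving subdivision vertices shift by an additive constant, each color $i\in\{1,\dots,5\}$ becomes a clean combinatorial constraint on $G$ itself. Color $1$ asks only that the chosen edges form a matching disjoint from the chosen vertices, while color $5$ asks that its chosen originals lie pairwise at distance $\ge 3$ in $G$ and its chosen subdivisions sit on edges that are pairwise far apart in $G$. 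This moves the whole problem from $D(G)$ to $G$, where structural methods are easier to apply.

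Next, let $G$ be a minimum subcubic counterexample. Routine extension arguments should show that $G$ is connected, $2$-edge-connected, and $3$-regular, and I would push further to rule out short cycles and small local features. The bulk of the work is to build a library of reducible configurations: small subgraphs $H\subseteq G$ such that any packing $(1,2,3,4,5)$-coloring of $D(G')$, for a suitable smaller graph $G'$, extends to a coloring of $D(G)$. For each uncolored vertex or edge one counts how many of the five colors are excluded by the ball of the appropriate radius around it and verifies that at least one color remains — being careful that the naive reduction (color every subdivision vertex with color $1$ and split $V(G)$ into four classes for colors $2$--$5$) fails, because it would require a packing $(1,1,2,2)$-coloring of $G$, which the abstract explicitly rules out. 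Finally I would set up a discharging scheme (initial charge $d(v)-3$ on each vertex, plus redistribution rules) to show that no subcubic graph avoids every reducible configuration.

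The main obstacle, and the reason this conjecture remains open, is color $5$: it forces pairwise distance $\ge 6$ in $D(G)$, equivalently distance $\ge 3$ in $G$ on originals, and in a cubic graph the ball of radius $2$ already contains up to $7$ vertices, so only a very sparse subset of $V(G)$ can receive color $5$. In reducibility arguments, the forbidden-color bookkeeping for colors $4$ and $5$ leaves essentially no slack. This is exactly where the main theorem of this paper sidesteps the difficulty by proving a packing $(1,1,2,2,3)$-coloring of $G$, which replaces the awkward ``distance $\ge 6$'' constraint by the more flexible ``distance $\ge 4$'' constraint, at the cost of one extra color on $D(G)$. I expect the final step to Conjecture~\ref{conj1} will require genuinely mixing colors between originals and subdivisions — letting some originals take color $1$ and some subdivisions take higher colors — together with local uncolor-and-repair along induced paths that exploit the degree-$2$ flexibility of subdivision vertices.
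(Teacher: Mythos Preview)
The statement you are addressing is Conjecture~\ref{conj1}, and the paper does \emph{not} prove it: the paper's main result is Theorem~\ref{mainresult} (every subcubic graph is packing $(1,1,2,2,3)$-colorable), which via Proposition~\ref{tool} yields only $\chi_p(D(G))\le 6$, not $5$. The authors say explicitly that they are ``just one step away from fully solving the conjecture.'' So there is no ``paper's own proof'' to compare against.

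Your proposal is likewise not a proof. It is a programme --- translate the coloring of $D(G)$ to a mixed vertex/edge coloring of $G$, take a minimum counterexample, build reducible configurations, discharge --- and you yourself concede the crucial gap: the color-$5$ (distance $\ge 6$ in $D(G)$) constraint leaves no slack in the forbidden-color counts, so you have no actual reducible configurations in hand and no discharging rules that work. The paper's approach is entirely different in spirit (an extremal choice of two disjoint independent sets, structural analysis of the leftover graph $H$, and a tailored $3$-coloring of $H$), and even that falls one color short. In short, neither you nor the paper has a proof of Conjecture~\ref{conj1}; your write-up is an honest outline of why the problem is hard, but it should not be presented as a proof proposal for the conjecture itself.
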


Balogh, Kostochka, and Liu~\cite{BKL1} answered the question of Goddard et al~\cite{GHHHR1} in the negative using the probabilistic method and later Bre\v sar and Ferme~\cite{BF1} provided an explicit construction. 
In contrast, Balogh, Kostochka, and Liu~\cite{BKL2} proved that the packing chromatic number of the $1$-subdivision of subcubic graphs is bounded by $8$. Furthermore, Conjecture~\ref{conj1} has been confirmed for many subclasses of subcubic graphs. In particular, Bre\v sar, Klav\v zar, Rall, and Wash~\cite{BKRW2} proved it for generalized prism of a cycle, Liu, Liu, Rolek, and Yu~\cite{LLRY1} showed it for subcubic planar graphs with girth at least $8$, Kostochka and Liu~\cite{KL1} confirmed it for subcubic outerplanar graphs, and Mortada and Togni~\cite{MT1} recently extended this class by including each subcubic $3$-saturated graph that has no adjacent heavy vertices.


Gastinue and Togni~\cite{GT2} proved the following statement, which is invaluable for proving Conjecture~\ref{conj1}.

\begin{prop}[Gastineau and Togni~\cite{GT2}]\label{tool}
Let $G$ be a graph and $(s_1, \ldots, s_k)$ be a sequence of non-decreasing positive integers. If $G$ is packing $(s_1, \ldots, s_k)$-colorable, then $D(G)$ is packing $(1, 2s_1+1, \ldots, 2s_k+1)$-colorable.   
\end{prop}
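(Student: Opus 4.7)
The plan is to lift a given packing $(s_1,\ldots,s_k)$-coloring $V_1,\ldots,V_k$ of $G$ to a packing $(1,2s_1+1,\ldots,2s_k+1)$-coloring of $D(G)$ by using the new color~$1$ as a ``dump'' for all subdivision vertices and shifting each original color class up by one slot. Concretely, I would define color classes $U_1,U_2,\ldots,U_{k+1}$ of $D(G)$ by letting $U_1$ consist of every subdivision vertex $w_e$ (where $e$ ranges over the edges of $G$) and setting $U_{i+1}=V_i$ for every $1\le i\le k$.

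The verification splits into two independent parts. For $U_1$, every subdivision vertex of $D(G)$ has both of its neighbors in $V(G)$, so no two subdivision vertices are adjacent; hence any two vertices of $U_1$ are at distance at least $2$, which is exactly the separation required by color~$1$. For $U_{i+1}=V_i$, the point is that distances between original vertices exactly double under subdivision: for $u,v\in V(G)$ we have $d_{D(G)}(u,v)=2\,d_G(u,v)$. Given that, if $u,v\in V_i$ are distinct then $d_G(u,v)\ge s_i+1$ by hypothesis, so $d_{D(G)}(u,v)\ge 2(s_i+1)=2s_i+2\ge (2s_i+1)+1$, which is the separation required by color $2s_i+1$.

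The only step that really needs an argument is the distance identity $d_{D(G)}(u,v)=2\,d_G(u,v)$ for $u,v\in V(G)$. For the upper bound, concatenating the two-edge paths associated with a shortest $u$-$v$ path in $G$ produces a $u$-$v$ walk in $D(G)$ of length $2\,d_G(u,v)$. For the lower bound, any $u$-$v$ path in $D(G)$ alternates original and subdivision vertices starting and ending at originals, so it has even length, and suppressing its subdivision vertices yields a $u$-$v$ walk in $G$ of half the length, giving $d_G(u,v)\le \tfrac{1}{2}d_{D(G)}(u,v)$.

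There is no genuine obstacle: the proposition is essentially a bookkeeping statement, and the design ``send subdivision vertices to color~$1$, shift the rest'' is forced by the fact that $D(G)$ doubles the metric on $V(G)$ while introducing an independent set of new vertices. I do not expect to need any structural hypothesis on $G$ beyond what is stated.
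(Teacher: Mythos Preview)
Your argument is correct. The paper does not actually prove Proposition~\ref{tool}; it is quoted from Gastineau and Togni~\cite{GT2} and used as a black box. Your proof is the standard one: dump the subdivision vertices into the new color~$1$ class (they form an independent set since $D(G)$ is bipartite with bipartition $V(G)\cup\{w_e:e\in E(G)\}$), keep each $V_i$ as the class for color $2s_i+1$, and use the metric doubling $d_{D(G)}(u,v)=2\,d_G(u,v)$ on original vertices. Both your verification of the independence of $U_1$ and your two-sided proof of the distance identity are clean and complete, so there is nothing to add.
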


Gastineau and Togni~\cite{GT2} showed that the Petersen graph has no packing $(1,1,k,k')$-colorings when $k,k' \ge 2$. Indeed, the maximum size of the union of two independent sets in the Petersen graph is $7$ and the diameter of the Petersen graph is $2$. Bre\v sar, Klav\v zar, Rall, and Wash~\cite{BKRW2} proved that the $1$-subdivision of the Petersen graph is packing $(1,2,3,4,5)$-colorable. By Proposition~\ref{tool}, if one can show every subcubic graph except the Petersen graph has a packing $(1,1,2,2)$-coloring, then Conjecture~\ref{conj1} is confirmed.

Many other packing $S$-colorings have also been studied. In particular, Gastineau and Togni~\cite{GT2} proved that every subcubic graph is packing $(1,1,2,2,2)$-colorable and packing $(1,2,2,2,2,2,2)$-colorable. Balogh et al~\cite{BKL2} showed that every subcubic graph has a packing $(1,1,2,2,3,3,k)$ with color $k \ge 4$ used at most once and every $2$-degenerate subcubic graph has a packing $(1,1,2,2,3,3)$-coloring. Cranston and Kim~\cite{CK1} showed that every cubic graph except the Petersen graph has a packing $(2,2,2,2,2,2,2,2)$-coloring. Thomassen~\cite{T1} and independently Hartke, Jahanbekam, and Thomas~\cite{HJT1} proved that every cubic planar graph is packing $(2,2,2,2,2,2,2)$-colorable.

In this paper, we prove that every subcubic graph has a packing $(1,1,2,2,3)$-coloring.

\begin{thm}\label{mainresult}
Every subcubic graph $G$ has a packing $(1,1,2,2,3)$-coloring.
\end{thm}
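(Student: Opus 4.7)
My plan is to proceed by induction on $|V(G)|$, considering a minimum counterexample $G$ and eliminating it through a list of reducible configurations. Recall that a packing $(1,1,2,2,3)$-coloring of $G$ is a partition $V(G) = V_1 \cup V_1' \cup V_2 \cup V_2' \cup V_3$ with $V_1, V_1'$ independent, $V_2, V_2'$ at pairwise distance at least~$3$, and $V_3$ at pairwise distance at least~$4$. Since $G[V_1 \cup V_1']$ must be bipartite, the essential task is to choose the four ``small'' classes $V_2, V_2', V_3$ so that after their deletion the remaining induced subgraph has no odd cycle; the two $1$-colors are then supplied by a proper $2$-coloring of that bipartite remainder.

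First I would perform the easy reductions. Any vertex $v$ of degree at most~$1$ can be extended after inductively coloring $G - v$, since $v$ has at most one neighbor and so at least one of the two $1$-colors is always available. Next I would rule out long paths of degree-$2$ vertices, certain short cycles containing degree-$2$ vertices, and small separating configurations; in each case the reduced graph is subcubic, admits a packing $(1,1,2,2,3)$-coloring by induction, and the coloring extends to $G$ after a bounded case analysis on the colors appearing at the boundary. The aim of this stage is to force the minimum counterexample to be essentially $3$-regular and to contain no short configurations of prescribed types.

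The second stage derives a contradiction from this residual structure. I would use a discharging argument, assigning each vertex an initial charge depending on its degree and the lengths of short cycles through it (so the total charge is nonpositive in a subcubic graph), together with redistribution rules designed so that the absence of every reducible configuration forces a positive final charge somewhere. Equivalently, one could argue directly by first selecting a maximal $3$-packing $V_3$, then two $2$-packings $V_2$ and $V_2'$ such that $V_2 \cup V_2' \cup V_3$ meets every odd cycle, and finally $2$-coloring the bipartite remainder. The main obstacle is the $V_3$-class, since every vertex of $V_3$ excludes color~$3$ on all vertices within distance~$3$, a set of size up to $22$ in a cubic graph, so placing enough $V_3$-vertices to break the necessary odd cycles without wasting flexibility is very tight. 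I expect the heart of the paper to be an enumeration of reducible configurations tuned precisely so that, in their absence, a suitable anchor---such as two non-adjacent degree-$2$ vertices, or a specific short cycle with prescribed neighborhood---is guaranteed, on which $V_3$ can be placed and around which the remaining classes can be built.
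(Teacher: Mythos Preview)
Your proposal is a plan, not a proof: neither the list of reducible configurations nor the discharging rules is specified, and the closing sentence (``I expect the heart of the paper to be\ldots'') is an admission that the main work has not been done. As written there is nothing to check; the sketch could equally well be the opening of a proof that succeeds or of one that stalls.

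More importantly, the paper does \emph{not} proceed by minimum counterexample, reducible configurations, or discharging at all. The actual argument runs in the opposite direction from your alternative outline: instead of first placing $V_3$ and the two $2$-packings and hoping the remainder is bipartite, the paper \emph{first} fixes the two $1$-classes. Concretely, one chooses disjoint independent sets $I_1,I_2$ with $|I_1|+|I_2|$ maximum, and among such pairs one with the fewest components in $G':=G-I_1-I_2$. Maximality immediately forces $\Delta(G')\le 1$, so every red component is a $P_1$ or $P_2$. One then builds the auxiliary graph $H$ on $V(G')$ with edges between red vertices at $G$-distance at most $2$; a short sequence of lemmas (using only the two extremal conditions on $I_1,I_2$) shows that $\Delta(H)\le 3$ and that each component of $H$ is a path, even cycle, or odd cycle. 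Hence $H$ is properly $3$-colorable with the third color used exactly once per odd cycle; these three color classes become the $2_a$-, $2_b$-, and $3$-classes. The only remaining task is to verify that two vertices receiving the $3$-color (one on each of two odd cycles of $H$) cannot be within $G$-distance $3$; this is handled by a direct structural analysis, again exploiting the extremality of $(I_1,I_2)$.

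The difference in order is what makes the argument go through. By committing to the large classes $I_1,I_2$ first, the leftover is so sparse that its ``square'' $H$ has bounded degree and nice components, and the single genuinely delicate constraint (the distance-$4$ condition on color $3$) reduces to ruling out a handful of local pictures. Your proposed order---choosing a $3$-packing first and then trying to hit all odd cycles with it and two $2$-packings---faces exactly the obstacle you identify (each $3$-vertex forbids color $3$ on up to $22$ others), and you give no mechanism for overcoming it.
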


Our result is also sharp due to the fact that the Petersen graph is not packing $(1,1,2,2)$-colorable. By Theorem~\ref{mainresult} and Proposition~\ref{tool}, a packing $(1,1,2,2,3)$-coloring of $G$ implies a packing $(1,3,3,5,5,7)$-coloring of $D(G)$. Therefore, $\chi_p(D(G)) \le 6$ for every subcubic graph $G$, improving the previous bound ($8$) of Balogh et al~\cite{BKL2}, and we are now just one step away from fully solving Conjecture~\ref{conj1}. 

\begin{cor}
Let $G$ be a subcubic graph. Then $\chi_p(D(G)) \le 6$.
\end{cor}

\vspace{-3mm}
\section{Proof of Theorem \ref{mainresult}}

We may assume that the graph $G$ is cubic and connected since every connected subcubic graph is a proper subgraph of some larger connected cubic graph. Take two disjoint independent sets $I_1$ and $I_2$ such that
\begin{equation}\label{condition-1}
   |I_1| + |I_2| \text{ is maximum among all choices of } I_1, I_2.
\end{equation}
Subject to Condition~\ref{condition-1}, we further take $I_1, I_2$ such that
\begin{equation}\label{condition-2}
  \text{the number of connected components in } G-I_1-I_2 \text{ is minimum}.
\end{equation}

Let $G' = G[V(G) - I_1 - I_2]$ and define the graph $H_{I_1, I_2}$ to be the graph $H$ with $V(H) = V(G) - I_1 - I_2$ and $E(H) = \{v_1v_2 \text{ }|\text{ } d_G(v_1,v_2) \le 2, v_1, v_2 \in V(H)\}$. We use the abbreviation $H$ to denote $H_{I_1, I_2}$ if the sets $I_1$ and $I_2$ are clear from the context.
The corresponding graph $G(H)$ of $H$ in $G$ is the graph with 
$V(G(H))=V(H)\cup \{u \text{ }|\text{ }u\in V(G) \setminus V(H),~v_1,v_2\in V(H),\text{ and }uv_1,uv_2\in E(G)\}$
and
$E(G(H))=\{v_1v_2 \text{ }|\text{ } v_1v_2\in E(G), v_1, v_2 \in V(H)\}\cup \{uv_1,uv_2 \text{ }|\text{ } v_1v_2\not\in E(G), uv_1,uv_2\in E(G), u\in V(G) \setminus V(H), v_1, v_2 \in V(H)\}$.
Essentially, $G(H)$ shows how each component of $H$ is connected in $G$. Denote $G'$ the {\em red graph} and each vertex in $I_1 \cup I_2$ {\em black}. For the addition and subtraction of subscripts within the set $\{1,2, \ldots, k\}$, we perform these operations modulo $k$. 

\begin{lemma}\label{maxdegree}
  $\Delta(G') \le 1.$
\end{lemma}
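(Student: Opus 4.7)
The plan is to argue by contradiction directly from the maximality condition~(\ref{condition-1}), without needing the component-minimizing condition~(\ref{condition-2}) at all. Suppose some vertex $v \in V(G') = V(G)\setminus(I_1\cup I_2)$ satisfies $d_{G'}(v) \ge 2$. I will derive a contradiction by enlarging one of the independent sets.

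The key observation is the following pigeonhole step. Since $G$ is cubic, $v$ has exactly three neighbors in $G$, and by the assumption $d_{G'}(v)\ge 2$, at least two of them lie in $V(G')$. Hence $v$ has at most one neighbor in $I_1 \cup I_2$, and so at least one of $I_1$ or $I_2$ — say $I_1$, after relabeling — contains no neighbor of $v$ in $G$.

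Now consider the pair $I_1' = I_1 \cup \{v\}$ and $I_2' = I_2$. Since $v \notin I_1 \cup I_2$ and $v$ has no neighbor in $I_1$, the set $I_1'$ is independent; it is clearly disjoint from $I_2' = I_2$. But $|I_1'| + |I_2'| = |I_1| + |I_2| + 1$, contradicting the maximality in~(\ref{condition-1}). Therefore no such $v$ exists, and $\Delta(G') \le 1$.

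I do not anticipate any real obstacle for this lemma: it is a one-step consequence of the cubic assumption combined with condition~(\ref{condition-1}), and condition~(\ref{condition-2}) is not invoked. The only thing to keep tidy is the reduction from $G$ subcubic to $G$ cubic and connected, but this has already been made at the start of the section.
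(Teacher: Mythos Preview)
Your proof is correct and essentially identical to the paper's own argument: both observe that a vertex with two $G'$-neighbors has at most one neighbor in $I_1\cup I_2$, so it can be absorbed into whichever of $I_1$, $I_2$ avoids that neighbor, contradicting Condition~(\ref{condition-1}). Neither proof uses Condition~(\ref{condition-2}).
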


\begin{proof}
Suppose not, i.e., there is a vertex $u$ of degree at least two in $G'$. 
Let $\{u_1,u_2\}\subseteq N_{G'}(u)$.
If $N_{G'}(u) \setminus \{u_1,u_2\}=\emptyset$, then we may assume 
$N_{G}(u) \setminus \{u_1,u_2\}=\{u_3\}$ and $u_3\in I_1$.
Since $\{u_1,u_2\}\cap (I_1\cup I_2)=\emptyset$, no matter whether $N_{G'}(u) \setminus \{u_1,u_2\}$ is empty or not, we can add $u$ to $I_2$ to obtain a contradiction with Condition~\ref{condition-1}.
\end{proof}






By Lemma~\ref{maxdegree}, we have the following corollary.

\begin{cor}\label{cor-p1-p2}
A connected component in $G'$ is either a $P_1$ or a $P_2$.   
\end{cor}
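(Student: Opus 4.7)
The plan is to observe that this corollary is an immediate structural consequence of Lemma~\ref{maxdegree}, so no new combinatorial machinery is needed. By Lemma~\ref{maxdegree}, every vertex of $G'$ has degree at most $1$. I will argue that any connected graph with maximum degree at most $1$ must be either a single vertex ($P_1$) or a single edge ($P_2$).

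First I would fix an arbitrary connected component $C$ of $G'$ and split into cases based on $|V(C)|$. If $|V(C)| = 1$, then $C \cong P_1$ and we are done. If $|V(C)| \ge 2$, pick any vertex $v \in V(C)$; since $C$ is connected and has at least two vertices, $v$ has at least one neighbor in $C$, and by Lemma~\ref{maxdegree} it has exactly one, say $w$. By the same argument applied to $w$, the vertex $w$ has exactly one neighbor in $C$, which must be $v$. Hence $\{v,w\}$ is already closed under taking neighbors in $C$, and by connectivity $V(C) = \{v,w\}$, so $C \cong P_2$.

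There is really no obstacle here; the statement is a direct translation of a degree bound into a structural description of components, and the only thing one needs to be careful about is handling the trivial case of an isolated vertex separately from the case of a genuine edge. The proof is short enough that it could even be written as a single sentence invoking Lemma~\ref{maxdegree}.
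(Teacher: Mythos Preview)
Your proposal is correct and matches the paper's approach exactly: the paper simply states that the corollary follows from Lemma~\ref{maxdegree} without further argument, and your write-up just spells out the elementary observation that a connected graph with maximum degree at most $1$ is either $P_1$ or $P_2$.
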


We now pay attention to the structures between two $P_2$s (of $G'$) in $G$. By Lemma~\ref{maxdegree}, two red $P_2$s cannot be adjacent in $G$. Furthermore, we show at most one $P_2$ can be included in a connected component of $H$.

\begin{lemma}\label{onep2}
At most one red $P_2$ can be included in a connected component of $H$.
\end{lemma}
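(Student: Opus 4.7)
The plan is to proceed by contradiction: assume two red $P_2$-components $P = u_1u_2$ and $Q = v_1v_2$ lie in the same component of $H$, and then exhibit disjoint independent sets $I_1', I_2'$ with $|I_1'|+|I_2'| > |I_1|+|I_2|$, contradicting Condition~\ref{condition-1}.

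First I would select $(P,Q)$ so as to minimize $d_H(V(P), V(Q))$ among all pairs of red $P_2$s sharing an $H$-component, and fix a shortest $H$-path $u_i = x_0, x_1, \ldots, x_d = v_j$. By Corollary~\ref{cor-p1-p2} each intermediate $x_t$ $(0<t<d)$ sits in a $P_1$- or $P_2$-component of $G'$, and the $P_2$ case is excluded by minimality, since an intermediate $P_2$-component $P_2'$ would yield a closer pair $(P, P_2')$. Consecutive vertices $x_t, x_{t+1}$ lie in distinct $G'$-components, so $d_G(x_t, x_{t+1}) = 2$, and their common $G$-neighbor $z_t$ must be black, since a red common neighbor would have two red neighbors in $G$, violating Lemma~\ref{maxdegree}.

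The heart of the argument is the base case $d = 1$. Here $u_i$ and $v_j$ share a single black neighbor $z_0$, which (after relabeling) I take to lie in $I_1$. By Lemma~\ref{maxdegree}, $u_i$ has exactly one red $G$-neighbor $u_{3-i}$ and one more black neighbor $w$ besides $z_0$. Condition~\ref{condition-1} says $I_2 \cup \{u_i\}$ cannot be independent; since $u_{3-i}$ is red and $z_0 \in I_1$, the only way this can fail is $w \in I_2$, and the symmetric reasoning at $v_j$ yields that its third black neighbor $w'$ lies in $I_2$ as well. Now set
\[
I_1' := (I_1 \setminus \{z_0\}) \cup \{u_i, v_j\}, \qquad I_2' := I_2.
\]
Since $d_G(u_i, v_j) = 2$ we have $u_i \not\sim v_j$, and the only potential $I_1'$-neighbors of $u_i$ or $v_j$ lie in $\{u_{3-i}, v_{3-j}, z_0, w, w'\}$, all of which are red, have been removed, or live in $I_2$. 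Hence $I_1'$ is independent with $|I_1'|+|I_2'| = |I_1|+|I_2|+1$, the desired contradiction.

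For the general case $d \ge 2$ the same swap extends along the whole path. Each intermediate $P_1$-vertex $r_t := x_t$ has three black $G$-neighbors $z_{t-1}, z_t, s_t$, and Condition~\ref{condition-1} applied to $r_t$ forces $\{z_{t-1}, z_t, s_t\}$ to meet both $I_1$ and $I_2$. One then assigns each of $u_i, r_1, \ldots, r_{d-1}, v_j$ to whichever side of $\{I_1, I_2\}$ is opposite to the label of its unique remaining black neighbor ($w$, $s_t$, or $w'$), deletes $z_0, \ldots, z_{d-1}$, and obtains $I_1', I_2'$. The pairwise non-$G$-adjacency of $\{u_i, r_1, \ldots, r_{d-1}, v_j\}$, forced by the distance-$2$ spacing along a shortest $H$-path, keeps both sets independent; the net size change is $+(d+1)-d = +1$, again contradicting Condition~\ref{condition-1}. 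The main obstacle I anticipate is this chain-case bookkeeping, since the individual side-choices at each $r_t$ must be mutually consistent and the interactions with pre-existing $I_1, I_2$ members must be traced, but the reasoning that pins the labels of $w$ and $w'$ in the $d=1$ case generalizes to pin the labels of all external neighbors along the chain.
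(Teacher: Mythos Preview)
Your argument is correct. The base case $d=1$ is exactly the paper's Claim~\ref{basecase}, with the same swap $I_1 \mapsto (I_1\setminus\{z_0\})\cup\{u_i,v_j\}$. For $d\ge 2$ you diverge from the paper: instead of making a single swap at the $u_i$-end (remove $z_0$, insert $u_i$) to produce a \emph{new} red $P_2$, namely $z_0x_1$, at $H$-distance $d-1$ from $v_1v_2$ and then invoking the inductive hypothesis, you recolor the whole chain at once---placing each of $u_i,r_1,\dots,r_{d-1},v_j$ into the side of $(I_1,I_2)$ opposite to its surviving black neighbour $w,s_t,w'$ and deleting all $z_t$. The verification goes through: the $r_t$'s and endpoints are pairwise non-adjacent in $G$ (each intermediate is a $P_1$-component, so any adjacency would create a forbidden $P_2$), path-minimality forces the $z_t$ to be pairwise distinct and disjoint from $\{w,w'\}\cup\{s_t\}$, and each inserted vertex has its only surviving black neighbour on the opposite side. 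Hence the net gain is $+1$.

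The paper's one-step induction is shorter and avoids this bookkeeping, but it tacitly needs the inductive hypothesis to apply to \emph{every} pair $(I_1,I_2)$ of maximum total size, not just one also satisfying Condition~\ref{condition-2}, since the swap may disturb the component count; your direct chain argument sidesteps that subtlety. Your closing remark about ``pinning the labels of all external neighbors'' is unnecessary and slightly misleading---you do not pin them, you simply read them---but the argument preceding it already suffices.
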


\begin{proof}
We first show two red $P_2$s cannot be connected by a vertex in $I_1 \cup I_2$ (i.e., at distance one in $H$).

\begin{claim}\label{basecase}
Two red $P_2$s cannot be connected by a vertex in $I_1 \cup I_2$ (i.e., at distance one in $H$).
\end{claim}

\begin{proof}
Suppose not, i.e., two red $P_2$s, $u_1u_2$ and $v_1v_2$, are connected by a vertex $w_1 \in I_1$ with $u_2w_1,v_1w_1 \in E(G)$. Let $N(u_2) = \{u_1, w_1, u_2'\}$ and $N(v_1) = \{w_1, v_2, v_1'\}$. By Lemma~\ref{maxdegree}, $u_2',v_1' \in I_1 \cup I_2.$ We may assume $u_2',v_1' \in I_2$ since otherwise we can add $u_2$ or $v_1$ to $I_2$, which contradicts Condition~\ref{condition-1}. We remove $w_1$ from $I_1$ and add $u_2,v_1$ to $I_1$ to increase the size of $I_1 \cup I_2$, which is again a contradiction with Condition~\ref{condition-1}.
\end{proof}

We are ready to prove the lemma by induction. The base case is already shown in Claim~\ref{basecase}. Assume two red $P_2$s cannot be at distance at most $k-1$ in $H$, where $k \ge 2$. We now show two red $P_2$s cannot be at distance $k$ in $H$. Let $u_1u_2, v_1v_2$ be two $P_2$s in $H$ that are at distance $k$. Let $x_1, \ldots, x_{k-1} \in V(G')$ and $w_1,\ldots, w_k \in I_1 \cup I_2$ such that $u_1u_2w_1x_1w_2\ldots x_{k-1}w_kv_1v_2$ is a path in $G$. We may assume $w_1 \in I_1$. Let $N(u_2) = \{u_1, w_1, u_2'\}$. We know $u_2' \in I_2$ since otherwise we can add $u_2$ to $I_2$ to increase the size of $I_1 \cup I_2$, which is a contradiction with Condition~\ref{condition-1}. We remove $w_1$ from $I_1$ and add $u_2$ to $I_1$, creating a new red $P_2$ $w_1x_1$ which is at distance $k-1$ from $v_1v_2$ in $H$. This is a contradiction with the inductive hypothesis.   
\end{proof}

We turn our attention to the structures between red $P_1$s (of $G'$) in $G$.

\begin{lemma}\label{noC_1}
If three red vertices are joined to the same vertex in $I_1 \cup I_2$ (we call such a configuration $C_1$), then they form a triangle component by themselves in $H$ (see Figure~\ref{C_1C_2} left picture).
\end{lemma}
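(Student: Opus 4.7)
The plan is to proceed by contradiction, via swaps among $I_1$, $I_2$, and the red set $V(G')$ that either strictly enlarge $|I_1|+|I_2|$, contradicting~\eqref{condition-1}, or preserve the size while strictly reducing the number of connected components of $H$, contradicting~\eqref{condition-2}. Since $G$ is cubic and $w$ has three red neighbors, $N_G(w)=\{x_1,x_2,x_3\}$; by symmetry I assume $w\in I_1$. The triangle on $\{x_1,x_2,x_3\}$ in $H$ is automatic, since every pair shares $w$ as a common $G$-neighbor; so I only need to exclude any $y\in V(G')\setminus\{x_1,x_2,x_3\}$ from the $H$-component of the $x_i$'s. After relabeling it suffices to suppose $y$ is $H$-adjacent to $x_1$ and split on $d_G(x_1,y)\in\{1,2\}$.

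First, assume $d_G(x_1,y)=1$, so $x_1y$ is a red $P_2$. Lemma~\ref{maxdegree} gives $x_1$'s third $G$-neighbor $z\in I_1\cup I_2$, and~\eqref{condition-1} forces $z\in I_2$ (otherwise one could add $x_1$ to $I_2$). The swap $I_1^{*}:=(I_1\setminus\{w\})\cup\{x_1\}$ is then a valid independent set of the same size as $I_1$. In the new coloring, $w$ becomes red with red neighbors $x_2,x_3$ and one neighbor $x_1\in I_1^{*}$, so $w$ has no $I_2$-neighbor and can be added to $I_2$; this contradicts~\eqref{condition-1}. The argument does not use $y\notin\{x_2,x_3\}$, so it also excludes $x_ix_j\in E(G)$, and consequently each $x_i$ is isolated in $G'$.

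Second, assume $d_G(x_1,y)=2$ via a common neighbor $p\neq w$. Since $x_1$ is isolated in $G'$, $p\in I_1\cup I_2$. If $p\in I_1$, the two $I_1$-neighbors $w,p$ of $x_1$ force its third $G$-neighbor $r$ into $I_2$ by~\eqref{condition-1}. I then attempt the double swap $I_1^{*}:=(I_1\setminus\{w,p\})\cup\{x_1\}$ and $I_2^{*}:=I_2\cup\{w\}$, which preserves the total size. Examining the third $G$-neighbor $q$ of $p$: if $q$ is red or $q\in I_1\setminus\{w,p\}$, then $p$ has no $I_2^{*}$-neighbor in the new coloring (note $w$ is not $G$-adjacent to $p$ because $p\notin N_G(w)$) and can be added to $I_2^{*}$, yielding a size contradiction with~\eqref{condition-1}. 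The symmetric case $p\in I_2$ is handled by an analogous swap.

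The hard part will be the residual subcase $p\in I_1$, $q\in I_2$ (and its $I_1\leftrightarrow I_2$ analogue), in which the double swap above leaves $p$ with the $I_2^{*}$-neighbor $q$. I plan to extend to a triple swap that also removes $q$ from $I_2$ and inserts $p$ into $I_2^{*}$, and then subcase on the two remaining $G$-neighbors of $q$: one branch gives a direct size contradiction with~\eqref{condition-1} by re-inserting a freshly red vertex into a color class, while the other will require~\eqref{condition-2}, showing that the triple swap severs the link between $\{x_2,x_3\}$ and the $y$-side of the $H$-component while creating no new component elsewhere, strictly decreasing the total number of $H$-components. Carefully tracking the $H$-component structure across the swaps is the delicate part of the argument.
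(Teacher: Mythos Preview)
Your overall strategy matches the paper's: swap vertices between $I_1$, $I_2$, and $V(G')$ to violate either~\eqref{condition-1} or~\eqref{condition-2}. Your treatment of the case $d_G(x_1,y)=1$ is correct and essentially identical to the paper's, including the observation that it rules out $x_ix_j\in E(G)$.

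Where you diverge is in the subcase $p\in I_1$, $q\in I_2$. You flag this as the ``hard part'' and propose a further triple swap with a delicate $H$-component analysis. This is unnecessary. Your double swap $I_1^{*}:=(I_1\setminus\{w,p\})\cup\{x_1\}$, $I_2^{*}:=I_2\cup\{w\}$ already finishes this subcase: after it, $p$ is red with red neighbor $y$, so $p$ merges into $y$'s component of $G'$, while the $P_1$-component $\{x_1\}$ disappears. The total $|I_1|+|I_2|$ is unchanged, so~\eqref{condition-1} is preserved, and the number of components of $G'$ drops by one, contradicting~\eqref{condition-2}. This is exactly the paper's argument (their ``move $u_2$ to $V(G')$, move $u$ to $I_2$, add $u_1$ to $I_1$''). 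No triple swap or further casework on $N_G(q)$ is needed.

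There is also a terminological slip you should fix: Condition~\eqref{condition-2} minimizes the number of connected components of $G'=G-I_1-I_2$, not of $H$. Your opening sentence and your plan for the residual subcase both speak of ``connected components of $H$'' and ``$H$-component structure''. In the present lemma the relevant swaps happen to reduce the $G'$-component count, so the argument still goes through, but your intended triple-swap argument (``severs the link between $\{x_2,x_3\}$ and the $y$-side of the $H$-component'') would not yield a contradiction with~\eqref{condition-2} as stated, since $x_2$ and $x_3$ are already separate $P_1$-components of $G'$.
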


\begin{proof}
Let $u_1,v_1,w_1$ be three red vertices in $V(G')$ with their common neighbour $u \in I_1$. We first note that $u_1v_1 \notin E(G)$. Otherwise, say $N(u_1) = \{u,v_1,u_2\}$ and $N(v_1) = \{u,u_1,v_2\}$. If $u_2$ or $v_2$ belongs to $I_1$, then we add $u_1$ or $v_1$ to $I_2$ respectively, which is a contradiction with Condition~\ref{condition-1}. Thus, we assume $u_2,v_2 \in I_2$. Now we move $u$ from $I_1$ to $I_2$, and add $v_1$ to $I_1$, which again contradicts Condition~\ref{condition-1}. Similarly, $u_1w_1, v_1w_1 \notin E(G)$.
Let $N(u_1) = \{u, u_2, u_3\}$, $N(v_1) = \{u, v_2, v_3\}$, and $N(w_1) = \{u, w_2, w_3\}$. 
Suppose first that $u_2$ is red. If $u_3\in I_1$, then we add $u_1$ to $I_2$.
If $u_3\in I_2$, then we move $u$ from $I_1$ to $I_2$, and add $u_1$ to $I_1$.
In each case we obtain contradiction with Condition~\ref{condition-1}. Hence $u_2\in I_1\cup I_2$, and by symmetry $u_3,v_2,v_3,w_2,w_3\in I_1\cup I_2$.


We may assume $u_2 \in I_1$ and $u_3 \in I_2$ since otherwise, say both $u_2,u_3 \in I_2$, we move $u$ from $I_1$ to $I_2$, and add $u_1$ to $I_1$, which is a contradiction with Condition~\ref{condition-1}. Similarly, we assume $v_2,w_2 \in I_1$ and $v_3, w_3 \in I_2$. Let $N(u_2) = \{u_1, u_4,u_5\}$. By symmetry, we only need to show $u_4,u_5 \in I_1 \cup I_2$. We first know $u_4,u_5$ cannot be both in $V(G')$ since otherwise we move $u,u_2$ from $I_1$ to $I_2$, and add $u_1$ to $I_1$. This contradicts Condition~\ref{condition-1}. Therefore, we may assume $u_4 \in V(G')$ and $u_5 \in I_2$. However, we move $u_2$ to $V(G')$, move $u$ to $I_2$, and add $u_1$ to $I_1$. The number of components in $G'$ is decreased by one, which contradicts Condition~\ref{condition-2}.
\end{proof}

\begin{figure}[ht]
\vspace{-8mm}
\begin{center}
  \includegraphics[scale=0.66]{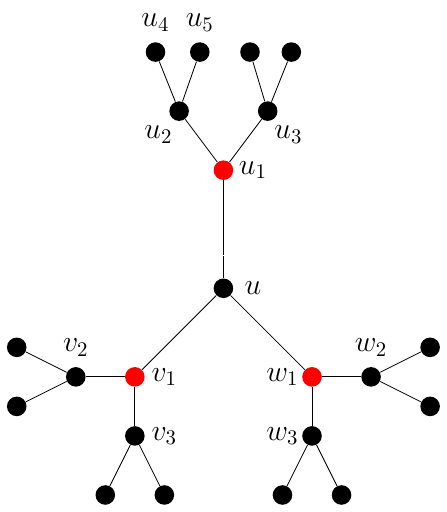} \hspace{35mm}
  \includegraphics[scale=0.58]{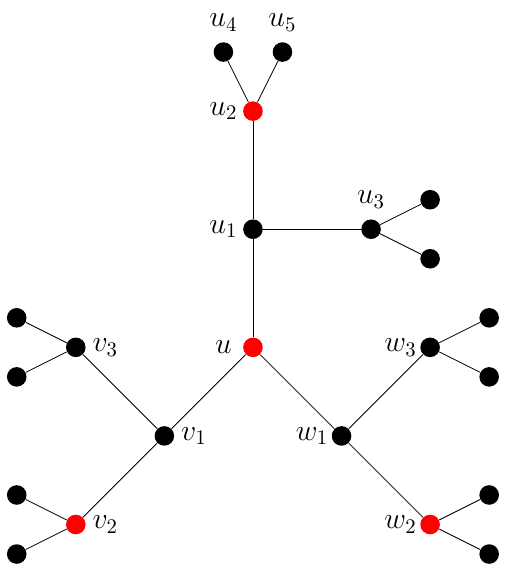} 
\caption{Configurations $C_1$ and $C_2$.}\label{C_1C_2}
\end{center}
\vspace{-8mm}
\end{figure}

Let $C_2$ be the configuration that three red $P_1$s are joining to the same red $P_1$ via a vertex in $I_1 \cup I_2$ (see Figure~\ref{C_1C_2} right picture). We show that configuration $C_2$ does not exist in $G$.

\begin{lemma}\label{noC_2}
Configuration $C_2$ does not exist in $G$.
\end{lemma}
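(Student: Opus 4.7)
The plan is to argue by contradiction: assume configuration $C_2$ exists, and let $x$ denote the central red $P_1$, with three distinct neighbors $y_1, y_2, y_3 \in I_1 \cup I_2$, each adjacent to an outer red $P_1$ $a_i$ so that $xy_i, y_ia_i \in E(G)$ for $i \in \{1,2,3\}$. Since $G$ is cubic and $x$ is a $P_1$ of $G'$, its three neighbors are exactly $y_1, y_2, y_3$. I will split into cases based on how the three $y_i$'s are distributed between $I_1$ and $I_2$, following the local-swap strategy used in Lemmas~\ref{onep2} and~\ref{noC_1}.

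In the first case, suppose all three $y_i$'s lie in the same part, say $I_1$ (the case $I_2$ being symmetric). Then $x$ has no neighbor in $I_2$, so $I_2 \cup \{x\}$ is independent and disjoint from $I_1$. This strictly enlarges $|I_1|+|I_2|$, contradicting Condition~\ref{condition-1}.

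In the remaining case, two of the $y_i$'s lie in one part and the third in the other; by symmetry assume $y_1, y_2 \in I_1$ and $y_3 \in I_2$. The plan is to swap $y_3$ out of $I_2$ back into $V(G')$ while simultaneously placing $x$ into $I_2$. Independence is immediate: the only potential new conflict is between $x$ and one of its neighbors, and $N_G(x) = \{y_1, y_2, y_3\}$ with $y_1, y_2 \in I_1$ and $y_3$ just removed from $I_2$. The new pair $(I_1,\, I_2 \setminus \{y_3\} \cup \{x\})$ is therefore a pair of disjoint independent sets with the same total size, so by Condition~\ref{condition-2} the corresponding new red graph must have at least as many components as $G'$. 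However, removing the $P_1$ component $\{x\}$ eliminates one component, while the newly red vertex $y_3$ still has the red neighbor $a_3$ and therefore joins an existing component rather than forming a new one. This produces strictly fewer components, contradicting Condition~\ref{condition-2}.

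The key observation is that in the 2-to-1 split, the swap is size-preserving and forces a component merge, so the contradiction is drawn from Condition~\ref{condition-2} rather than Condition~\ref{condition-1}. I expect that no subcase analysis on the third neighbor $t_3$ of $y_3$ will be required: whether $t_3 \in I_1 \cup I_2$ or $t_3 \in V(G')$, the red neighborhood of $y_3$ in the new graph always contains $a_3$, and in the latter case $y_3$ simply merges $a_3$'s component with $t_3$'s rather than creating a new one, so the component count strictly decreases in every subcase.
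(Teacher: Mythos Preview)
Your proof is correct and follows essentially the same approach as the paper's: split on how the three intermediate vertices $y_1,y_2,y_3$ are distributed between $I_1$ and $I_2$, contradict Condition~\ref{condition-1} in the monochromatic case, and in the $2$--$1$ case perform a size-preserving swap (remove the minority $y_i$, insert $x$ into that part) to strictly drop the component count and contradict Condition~\ref{condition-2}. The only cosmetic difference is that the paper first invokes Lemma~\ref{noC_1} to force the third neighbor of the swapped $y_i$ into $I_1\cup I_2$, whereas you correctly note that this is unnecessary: even if that third neighbor were red, $y_i$ would still attach to the existing red vertex $a_i$, so no new component is created and the count still strictly decreases.
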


\begin{proof}
Suppose to the contrary that there is a configuration $C_2$ in $G$ (see Figure~\ref{C_1C_2} right picture). We have three red $P_1$s, $u_2,v_2,w_2$, joining to the same red $P_1$, $u$, via vertices $u_1,v_1,w_1 \in I_1 \cup I_2$ respectively. By Lemma~\ref{noC_1}, $u_3,v_3,w_3 \in I_1 \cup I_2$. If all of $u_1,v_1,w_1 \in I_1$ (or $I_2$), then we can add $u$ to $I_2$ and the size of $I_1 \cup I_2$ is increased, which is a contradiction with Condition~\ref{condition-1}. Therefore, we may assume $u_1 \in I_1$ and $v_1,w_1 \in I_2$. However, we move $u_1$ to $V(G')$ and add $u$ to $I_1$, which is a contradiction with Condition~\ref{condition-2} as the number of components in $G'$ is decreased by one.
\end{proof}

We show the maximum degree of $H$ is bounded and prove some additional properties of $H$.

\begin{lemma}\label{propertyH}
$\Delta(H) \le 3$. Furthermore, if $u_1$ is a $3$-vertex in $H$, then it must belong to a red $P_2$ (cannot be a red $P_1$), say $u_1u_2$, in $G$ and $u_2$ must be a $1$-vertex in $H$. 
\end{lemma}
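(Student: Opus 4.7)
The proof has two parts; both rely on counting $H$-neighbours through the structure of the red graph $G'$ and on performing swap moves between $I_1 \cup I_2$ and the red set to violate Condition~\ref{condition-1} or Condition~\ref{condition-2}.

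For the bound $\Delta(H) \le 3$, fix $u \in V(H)$ with $N_G(u) = \{a,b,c\}$. Since $\Delta(G') \le 1$ by Lemma~\ref{maxdegree}, every red vertex at $G$-distance exactly $2$ from $u$ is reached through an intermediate vertex in $I_1 \cup I_2$. Corollary~\ref{cor-p1-p2} bounds the number of red $G$-neighbours of $u$ by $1$, and every black $G$-neighbour $w$ of $u$ contributes at most two red vertices to $N_H(u)$ (its two non-$u$ neighbours in $G$). If some such $w$ has all three of its $G$-neighbours red (a configuration $C_1$ through $w$), Lemma~\ref{noC_1} forces $u$ together with those two red vertices to be a triangle component of $H$, so $\deg_H(u) = 2$. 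Otherwise each black $G$-neighbour of $u$ contributes at most one red vertex, and summing over the (three or two) black neighbours and the at most one red $G$-neighbour yields $\deg_H(u) \le 3$ whether $u$ is a red $P_1$ or lies on a red $P_2$.

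Now assume $u_1$ is a $3$-vertex in $H$; I first rule out the possibility that $u_1$ is a red $P_1$. Under that assumption, the count above forces each of its three $G$-neighbours $a,b,c \in I_1 \cup I_2$ to contribute exactly one distinct red vertex to $N_H(u_1)$. If $a,b,c$ all lie in a single $I_j$, then $u_1$ has no neighbour in the other set and adding $u_1$ there contradicts Condition~\ref{condition-1}. Otherwise exactly one of the two sets contains a single element of $\{a,b,c\}$, say $a$, and the swap removing $a$ from its set and inserting $u_1$ is valid and size-preserving. In the resulting red graph the $P_1$-component $\{u_1\}$ vanishes and the newly red $a$ merges with the component of its unique other red $G$-neighbour $x_a$ (a $P_1$ or a $P_2$), so the total number of components strictly decreases, contradicting Condition~\ref{condition-2}.

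Finally, let $u_1$ lie on a red $P_2$ $u_1u_2$ and write $b,c$ for the two black $G$-neighbours of $u_1$; let $y,z$ be the red vertices they contribute to $N_H(u_1)$. Assume for contradiction that $\deg_H(u_2) \ge 2$, and let $p,q$ be the black $G$-neighbours of $u_2$ with (without loss of generality) a red $G$-neighbour $x_p \ne u_2$ of $p$. Lemma~\ref{onep2} forces each of $y, z, x_p$ to be a red $P_1$, since otherwise two red $P_2$s would share an $H$-component with $u_1$. As in the previous paragraph, if $\{b,c\}$ or $\{p,q\}$ lies entirely in one of $I_1, I_2$, adding $u_1$ or $u_2$ to the opposite set contradicts Condition~\ref{condition-1}. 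Otherwise let $J$ be the set containing $p$ and let $e$ be the element of $\{b,c\}$ in the opposite set. The double swap removing $p$ from $J$ and $e$ from the opposite set while inserting $u_2$ into $J$ and $u_1$ into the opposite set is valid and size-preserving; in the new red graph the $P_2$ $\{u_1,u_2\}$ disappears while $p$ merges with $\{x_p\}$ and $e$ merges with the appropriate $\{y\}$ or $\{z\}$. A short case-check shows the component count strictly decreases, even when some of $x_p, y, z$ coincide (such coincidences only collapse more old components into each new one). This contradicts Condition~\ref{condition-2}.

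The main technical hurdle is not conceptual but bookkeeping: verifying that every swap preserves independence of $I_1$ and $I_2$, preserves $|I_1|+|I_2|$, and strictly reduces the number of red components across all set-membership patterns for $b,c,p,q$, and across all coincidence patterns among the red targets $x_p, y, z$.
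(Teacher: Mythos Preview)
Your argument is correct and follows essentially the same route as the paper: the degree bound via Lemma~\ref{noC_1}, the $P_1$ case by the swap that the paper packages as Lemma~\ref{noC_2}, and the $P_2$ case by the diagonal swap that the paper isolates as Claim~\ref{p2structure} (Lemma~\ref{diagonal}). The only difference is organizational---you redo the $C_2$ argument inline and perform the diagonal swap directly rather than stating it as a standalone claim---and your acknowledged bookkeeping about coincidences among $x_p,y,z$ and disjointness of $\{b,c\}$ from $\{p,q\}$ indeed checks out once one notes (via Lemma~\ref{noC_1}) that a common black neighbour of $u_1$ and $u_2$ cannot contribute a third red neighbour.
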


\begin{proof}
Let $u_1\in V(G')$.
By Corollary \ref{cor-p1-p2}, $u_1$ is either an endpoint of a red $P_2$ or a $P_1$ by itself. 

In the former case, say this red $P_2$ is $u_1u_2$. Let $N(u_1) = \{u_2, u_3, u_4\}$ and $N(u_2) = \{u_1, u_5, u_6\}$. By Lemma~\ref{maxdegree}, $u_3,u_4,u_5,u_6 \in I_1 \cup I_2$. Let $N(u_3) = \{u_1, u_7, u_8\}$, $N(u_4) = \{u_1, u_9, u_{10}\}$, $N(u_5) = \{u_2, u_{11}, u_{12}\}$, $N(u_6) = \{u_2, u_{13}, u_{14}\}$. By Lemma~\ref{noC_1}, $u_7$ and $u_8$ cannot be both in $G'$, and $u_9$ and $u_{10}$ cannot be both in $G'$. Thus, $u_1$ has degree at most $3$ in $H$. We may assume that $u_3 \in I_1$ and $u_4 \in I_2$ since otherwise, say $u_3,u_4 \in I_1$, we can add $u_1$ to $I_2$ and it contradicts Condition~\ref{condition-1}. Similarly, we may assume $u_5 \in I_1$ and $u_6 \in I_2$.

\begin{claim}\label{p2structure}
If $u_7 \in V(G')$, then both $u_{13}$ and $u_{14}$ must be in $I_1 \cup I_2$. By symmetry, if $u_9 \in V(G')$, then both $u_{11}$ and $u_{12}$ must be in $I_1 \cup I_2$.    
\end{claim}

\begin{proof}
Suppose to the contrary that $u_{13} \in V(G')$. By Lemma~\ref{onep2}, each of $u_7$ and $u_{13}$ is a red $P_1$. By Lemma~\ref{noC_1}, $u_{14} \in I_1$. We move $u_3$ and $u_6$ to $V(G')$, and add $u_1$ to $I_1$ and $u_2$ to $I_2$. This is a contradiction with Condition~\ref{condition-2} since the number of components in $G'$ is dropped by one. 
\end{proof}

\noindent Suppose now that $u_1$ is a $3$-vertex in $H$. It follows that $|\{u_7,u_8\}\cap V(G')|=|\{u_9,u_{10}\}\cap V(G')|=1$.
Assume, without loss of generality, that $u_7,u_9\in V(G')$. Claim \ref{p2structure} implies that $u_2$ has degree 1 in $H$.

In the latter case, say $N(u_1) = \{u_2, u_3, u_4\}$ with $u_2,u_3,u_4 \in I_1 \cup I_2$. Let $N(u_2) = \{u_1, u_5, u_6\}$, $N(u_3) = \{u_1, u_7, u_8\}$, and $N(u_4) = \{u_1, u_9, u_{10}\}$. By Lemma~\ref{noC_1}, $|\{u_5,u_6,u_7,u_8,u_9,u_{10}\}\cap V(G')|\leq 3$ and thus $u_1$ has degree at most three in $H$. If $u_1$ has degree three, then Lemma~\ref{noC_1} implies $|\{u_5,u_6\}\cap V(G')|=|\{u_7,u_8\}\cap V(G')|=|\{u_9,u_{10}\}\cap V(G')|=1$. 
Assume, without loss of generality, that $u_5,u_7,u_9\in V(G')$. We then have a configuration $C_2$ and it contradicts Lemma~\ref{noC_2}.
\end{proof}

The proof of Claim~\ref{p2structure} actually implies the following lemma.

\begin{lemma}\label{diagonal}
   Let $N(u_1) = \{u_2,u_3,u_4\}$ and $N(u_2) = \{u_1, u_5,u_6\}$. If $u_1u_2$ is a red $P_2$ with $u_3,u_5 \in I_1$ and $u_4, u_6 \in I_2$, then it is impossible to have two distinct red $P_1$s so that one is in $N(u_3)\setminus \{u_1\}$ and the other is in $N(u_6)\setminus \{u_2\}$.\hfill $\square$
   
\end{lemma}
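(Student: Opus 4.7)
The plan is to reuse verbatim the swap used in the proof of Claim~\ref{p2structure}. Suppose for contradiction that there exist two distinct red $P_1$s, say $x\in N(u_3)\setminus\{u_1\}$ and $y\in N(u_6)\setminus\{u_2\}$. I would form the modified partition $I_1'=(I_1\setminus\{u_3\})\cup\{u_1\}$ and $I_2'=(I_2\setminus\{u_6\})\cup\{u_2\}$, and first verify that these remain independent sets. The only possible neighbor of $u_1$ in $I_1$ is $u_3$ (now removed), since $u_2$ is red and $u_4\in I_2$; a symmetric check works for $u_2$ in $I_2'$.

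The swap preserves $|I_1|+|I_2|$, so the new partition still attains Condition~\ref{condition-1}, which (via the argument of Lemma~\ref{maxdegree}) forces the new red graph $G-I_1'-I_2'$ to still satisfy $\Delta\le 1$. The gain is in counting components of the red graph. Before the swap, among the relevant vertices, the red graph has the $P_2$ $u_1u_2$ together with the two $P_1$ components $\{x\}$ and $\{y\}$, contributing three components. After the swap, $u_1$ and $u_2$ leave the red graph while $u_3$ and $u_6$ enter it; because $u_3x$ and $u_6y$ are edges of $G$ and $x,y$ remain red, these three components are replaced by the two $P_2$s $u_3x$ and $u_6y$, a net decrease of one. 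This contradicts Condition~\ref{condition-2}.

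The main obstacle I expect is verifying that the two newly formed red components are genuine disjoint $P_2$s rather than fragments of something larger. The distinctness hypothesis $x\neq y$ in the statement prevents the two from merging into one; and $u_3$ or $u_6$ cannot end up as a degree-$2$ red vertex, because the resulting $|I_1'|+|I_2'|$ would remain maximum while admitting a further local augmentation in the sense of Lemma~\ref{maxdegree}, contradicting Condition~\ref{condition-1}. Likewise, Lemma~\ref{noC_1} applied at $u_3$ and $u_6$ ensures that the other $G$-neighbors of $x$ and $y$ are untouched by the swap and remain in $I_1'\cup I_2'$. With these checks in hand, the component count strictly drops, violating Condition~\ref{condition-2} and establishing the lemma.
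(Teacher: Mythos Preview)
Your argument is correct and uses exactly the swap the paper has in mind: the paper simply states that the proof of Claim~\ref{p2structure} implies this lemma, and that proof performs precisely your move $I_1'=(I_1\setminus\{u_3\})\cup\{u_1\}$, $I_2'=(I_2\setminus\{u_6\})\cup\{u_2\}$ and then counts a drop in red components. Your more careful verifications (independence of $I_1',I_2'$, the $\Delta\le1$ bound via Condition~\ref{condition-1}, and the degree-$2$ obstruction for overlaps) are all sound; the only imprecision is your invocation of Lemma~\ref{noC_1} to control the other neighbors of $x$ and $y$, which is not quite the right citation---but this is harmless, since that conclusion already follows from $x,y$ being red $P_1$s together with your degree-$2$ argument.
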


\begin{lemma} \label{no.cycle+bar}
 Each component of $H$ is not isomorphic to a cycle with one vertex adjacent to a leaf.
\end{lemma}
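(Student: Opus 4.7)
The plan is to suppose, for contradiction, that some component $C$ of $H$ is a cycle $v_0v_1\cdots v_{k-1}v_0$ (with $k\ge 3$) together with a leaf attached to the unique degree-$3$ vertex $v_0$, and then derive a contradiction by walking the red $P_2$ around the cycle via a sequence of swaps that repeatedly trigger Lemma~\ref{diagonal}. Before any swaps, I would use Lemma~\ref{propertyH} to identify $v_0$ as the endpoint of a red $P_2$ $v_0v_0'$ whose other endpoint $v_0'$ has $H$-degree $1$; thus the leaf is forced to equal $v_0'$, and by Lemma~\ref{onep2} every other cycle vertex $v_1,\ldots,v_{k-1}$ must be a red $P_1$. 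Following the proof of Lemma~\ref{propertyH}, I write $N_G(v_0)=\{v_0',a,b\}$ with $a\in I_1$ and $b\in I_2$, and for $0\le i\le k-1$ let $w_i\in I_1\cup I_2$ denote a common $G$-neighbor of $v_i$ and $v_{i+1\bmod k}$ (with $w_0=a$ and $w_{k-1}=b$, where membership in $I_1\cup I_2$ comes from Lemma~\ref{noC_1}); let $v_i^*$ denote the third $G$-neighbor of $v_i$ for $i\ge 1$.

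First, I would carry out the \emph{$a$-iteration}: at step $s=1,2,\ldots$, simultaneously move $w_{s-1}$ from $I_1$ to $V(G')$ and $v_{s-1}$ from $V(G')$ to $I_1$. A direct check shows $|I_1|+|I_2|$ and the component count of $G-I_1-I_2$ remain unchanged, so the new pair still satisfies Conditions~\ref{condition-1} and~\ref{condition-2}, and the new red $P_2$ is $w_{s-1}v_s$. Applying Lemma~\ref{diagonal} with $u_1=w_{s-1}$, $u_2=v_s$, $u_3=v_{s-1}\in I_1$, and $u_4=w_{s-1}''\in I_2$ (where $w_{s-1}''$ is the third $G$-neighbor of $w_{s-1}$), the maximality of the current $|I_1|+|I_2|$ forces exactly one of $w_s,v_s^*$ to be in $I_1$ and the other in $I_2$. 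If $w_s\in I_2$, then $v_{s+1}$ is a red $P_1$ in $N(w_s)\setminus\{v_s\}$ and either $v_0'$ (when $s=1$) or the just-freed $w_{s-2}$ (when $s\ge 2$) is a red $P_1$ in $N(v_{s-1})\setminus\{w_{s-1}\}$; these are distinct whenever $s\le k-2$ (so that $v_{s+1}\ne v_0$), violating Lemma~\ref{diagonal}. Otherwise $w_s\in I_1$, and the next swap is legal, so I iterate.

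If the $a$-iteration completes all of $s=1,\ldots,k-2$ without a contradiction, then $w_1,\ldots,w_{k-2}\in I_1$. Running the symmetric \emph{$b$-iteration}, starting from the swap $b\to v_0$ and walking the $P_2$ around the cycle in the opposite direction, I similarly either obtain a Lemma~\ref{diagonal} contradiction or conclude $w_1,\ldots,w_{k-2}\in I_2$. Since $I_1\cap I_2=\emptyset$ and $k-2\ge 1$, these two conclusions are mutually incompatible, so at least one iteration must produce a contradiction, which is the contradiction I want.

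The main obstacle will be the detailed bookkeeping required to justify each swap: I need to confirm that Conditions~\ref{condition-1} and~\ref{condition-2} genuinely survive every intermediate modification, that the red $P_1$ witnesses on both sides of Lemma~\ref{diagonal} are correctly identified and distinct (especially in the boundary case $s=1$, where $v_0'$ plays the role of the general $w_{s-2}$), and that the iteration's stopping point $s=k-2$ is sharp, i.e., at $s=k-1$ the would-be witness $v_{s+1}=v_0$ has already been moved into $I_1$, which is precisely why running only one direction does not suffice and both the $a$- and $b$-iterations are needed to close the argument.
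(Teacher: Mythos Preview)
Your argument is essentially correct, but it is considerably more elaborate than the paper's. The paper's proof is a one-step affair: since $w_0=a\in I_1$ and $w_{k-1}=b\in I_2$, there must exist some index $i$ (with $1\le i\le k-1$) at which $w_{i-1}\in I_1$ and $w_i\in I_2$; looking at the third neighbour $v_i^*$ of $v_i$, one performs a \emph{single} swap (either $v_i\leftrightarrow w_i$ in $I_2$ or $v_i\leftrightarrow w_{i-1}$ in $I_1$, depending on whether $v_i^*\in I_1$ or $v_i^*\in I_2$) which absorbs the red $P_1$ $v_i$ and creates a red $P_2$ adjacent to it, dropping the number of components by one and contradicting Condition~\ref{condition-2} directly. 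No iteration and no appeal to Lemma~\ref{diagonal} are needed.

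By contrast, your $a$-iteration walks the red $P_2$ around the cycle and uses Lemma~\ref{diagonal} at every step to force $w_1,\ldots,w_{k-2}\in I_1$, and then the symmetric $b$-iteration forces the same vertices into $I_2$. This does work, but it hides exactly the observation the paper exploits: your $a$-iteration, if it completes, shows that the only colour change along $w_0,\ldots,w_{k-1}$ is between $w_{k-2}$ and $w_{k-1}$, and the paper would finish right there with one swap. Two small remarks on your write-up: the fact that each $w_i$ lies in $I_1\cup I_2$ follows from Lemma~\ref{maxdegree} (not Lemma~\ref{noC_1}), and when you claim $w_{s-1}''\in I_2$ you are implicitly using that $w_{s-1}$ was in $I_1$ before its swap, which you know inductively but should state. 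These are minor; the bookkeeping you flag in your last paragraph is indeed the only real work, and it all checks out.
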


\begin{proof}
Suppose not, i.e., $H$ has a cycle with one vertex adjacent to a leaf.
Let the cycle be $u_1u_2 \ldots u_ku_1$ and let $x_1$ be the leaf adjacent to $u_1$.
By Lemma~\ref{propertyH}, $x_1u_1$ is a red $P_2$ in $G$. 
Since there are no other red $P_2$s in the cycle by Lemma~\ref{onep2}, 
each $u_i$ with $2\leq i\leq k$ is a red $P_1$ in $G$. 
Now the definition of $H$ implies that there is a cycle, say $u_1w_1u_2w_2 \cdots u_kw_ku_1$, in $G$.
Let $N(u_i) = \{w_{i-1},w_i,x_i\}$ and $N(w_i) = \{u_i,u_{i+1},y_i\}$, where $1 \le i \le k$.
By Lemma~\ref{onep2} and~\ref{noC_1}, each $x_i$ with $i\neq 1$ and each $y_i$ are in $I_1\cup I_2$.
We may assume $w_1 \in I_1$ and $w_k \in I_2$ since otherwise we can add $u_1$ to $I_1$ or $I_2$, which contradicts Condition~\ref{condition-1}.
It follows that there is an $i$ with $2 \le i \le k$ such that $w_{i-1} \in I_1$ and $w_i \in I_2$.
If $x_i\in I_1$, then let $I_2:=I_2\cup\{u_i\}\setminus \{w_i\}$.
If $x_i\in I_2$, then let $I_1:=I_1\cup\{u_i\}\setminus \{w_{i-1}\}$.
In each case we obtain a contradiction with Condition \ref{condition-2}.
\end{proof}

By Lemmas~\ref{propertyH} and \ref{no.cycle+bar}, we conclude that 
each component of $H$ is a tree or an even cycle or an odd cycle.
Clearly, $H$ is 3-colorable.
Let $h$ be a proper $3$-coloring of $H$ using colors $A,B,C$ such that 

\textbf{(i)} the color
$C$ is used exactly once on each odd cycle component of $H$, and more precisely,

\textbf{(ii)} if
$u_1u_2$ is an edge of an odd cycle component such that $u_1,u_2\in V(G')$, 
then we arbitrarily choose a vertex from $u_1$ and $u_2$, and color its other neighbor on the cycle with $C$. 

We now complete the proof of Theorem~\ref{mainresult}. 
Since each vertex of $G$ is either in $I_1\cup I_2$ or colored with $A,B$, or $C$,
we construct a coloring $f$ of $G$ by assigning
vertices in $I_1$ with color $1_a$,  
vertices in $I_2$ with color $1_b$,
vertices colored by $A$ with color $2_a$,
vertices colored by $B$ with color $2_b$,
and vertices colored by $C$ with color $3$.
Since $I_1$ and $I_2$ are independent sets in $G$, vertices with color $1_a$ or $1_b$ forms a $1$-independent set respectively.
By the definitions of $H$ and the colorings $h$ and $f$, vertices with color $2_a$ or $2_b$ forms a $2$-independent set respectively.
At last, it is sufficient to show that vertices with color $3$ forms a $3$-independent set, and therefore, $f$ is a packing $(1,1,2,2,3)$-coloring of $G$.


Suppose not, i.e., there are two vertices $u,v$ with $f(u)=f(v)=3$ and $d_G(u,v) \le 3$ (denoted by a {\em $3$-$3$ conflict}). By the coloring assignment rules of $h$ and $f$, $u$ and $v$ are in different components of $H$. Let $S_1,S_2$ be two components of $H$ such that $u \in S_1$ and $v\in S_2$. 
Since the color $3$ ($C$) is only used on the odd cycle components of $H$,
we may assume that $S_1$ and $S_2$ are odd cycles. Let the cycle $S_1$ be $u_1u_2 \ldots u_ku_1$, $k \ge 3$.
We discuss different cases where a $3$-$3$ conflict can occur.

\textbf{Case 1:} 
The corresponding cycle of $S_1$ in $G$ has a red $P_2$, say, $u_1u_2$.

The cycle $S_1$ of $H$ corresponds to a cycle of $G$, say, 
$u_1u_2w_2\cdots u_kw_ku_1$.
Let $N(u_1) = \{u_2,w_k,x_1\}$, $N(u_2) = \{u_1,w_2,x_2\}$, $N(u_i) = \{w_{i-1},w_i,x_i\}$ for $3 \le i \le k$, and $N(w_i) = \{u_i, u_{i+1}, y_i\}$ for $2 \le i \le k$. By Corollary~\ref{cor-p1-p2}, Lemma~\ref{onep2} and~\ref{noC_1}, all $x_i$s, $y_i$s, and $w_i$s are in $I_1\cup I_2$.
By the rule \textbf{(ii)} of $h$ and by the definition of $f$, we may assume $f(u_3) = 3$, i.e., $u:=u_3$.

By Condition \ref{condition-1}, we assume $w_k \in I_1$ and $x_1 \in I_2$.
If $k=3$, then we claim $w_2\in I_1$, and consequently (by Condition~\ref{condition-1}) $x_2\in I_2$ and $x_3\in I_2$.
Suppose not, i.e., $w_2\in I_2$. Assume, without loss of generality, that $x_3\in I_2$.  
Now we can reassign $I_1:=I_1\cup \{u_1, u_3\}\setminus \{w_3\}$ and $I_2:=I_2$. This is a contradiction with Condition~\ref{condition-1}. If $k\geq 5$, then by Lemma \ref{diagonal} we have $w_2\in I_1$ and $x_2\in I_2$. 

We claim for every $2 \le i \le k$, $w_i \in I_1, x_i,y_i \in I_2$, and the cycle $u_1u_2w_2\cdots u_kw_ku_1$ has no chord. This is true for $k=3$ and thus we assume $k \ge 5$. 
Suppose that there is an $i$ with $w_{i-1} \in I_1$ and $w_i \in I_2$, where $3 \le i \le k-2$. If $x_i \in I_1$, 
then we reassign $I_1:=I_1$ and $I_2:=I_2\cup \{u_i\}\setminus \{w_i\}$.
If $x_i\in I_2$,
then we reassign $I_1:=I_1\cup \{u_i\}\setminus \{w_{i-1}\}$ and $I_2:=I_2$.
In either case we obtain a contradiction with Condition \ref{condition-2}, since the number of components in $H$ is dropped by one. Hence, $w_i \in I_1$ for every $2 \le i \le k$. It follows that
$x_i,y_i \in I_2$ for every $2 \le i \le k$, and there is no chord in the cycle.


\begin{claim}\label{2ndneighbour}
$N(x_i)\setminus \{u_i\} \subseteq I_1$ and $N(y_i) \setminus \{ w_i\} \subseteq I_1$, where $1 \le i \le k$. 
\end{claim}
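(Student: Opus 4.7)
The plan is to argue both inclusions by contradiction, each time swapping a vertex between $V(G')$ and $I_1\cup I_2$ in a size-preserving way that either strictly decreases the number of red components (contradicting Condition~\ref{condition-2}) or allows $I_1\cup I_2$ to be augmented (contradicting Condition~\ref{condition-1}). Since $x_i, y_i \in I_2$ and $I_2$ is independent, their remaining neighbors already lie in $V(G')\cup I_1$, so it suffices to rule out a neighbor in $V(G')$.

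For $x_i$, suppose $z\in N(x_i)\cap V(G')$ with $z\neq u_i$. Set $I_2^{\ast}:=(I_2\setminus\{x_i\})\cup\{u_i\}$; because the three $G$-neighbors of $u_i$ are $w_{i-1}, w_i\in I_1$ and $x_i$ (now removed from $I_2$), the pair $(I_1, I_2^{\ast})$ is again disjoint and independent with the same total size. In the new red graph $G^{\ast}$ the isolated red component $\{u_i\}$ is destroyed, while $x_i$ becomes red with red neighbor $z$. If $z$ was a red $P_1$ and $x_i$ has no second red neighbor, then $\{u_i\}$ and $\{z\}$ merge into the red $P_2$ $\{x_i, z\}$, strictly decreasing the number of red components and contradicting Condition~\ref{condition-2}. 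Otherwise some vertex of $G^{\ast}$ has red degree $\ge 2$, and the augmentation argument in the proof of Lemma~\ref{maxdegree} enlarges $(I_1, I_2^{\ast})$, contradicting Condition~\ref{condition-1}. When $i=1$ the $P_2$ $u_1u_2$ splits (leaving $u_2$ as a red $P_1$), and the same dichotomy essentially applies.

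For $y_i$, suppose $z\in N(y_i)\cap V(G')$ with $z\neq w_i$ and let $b'$ denote $y_i$'s third neighbor. I would attempt the dual swap $I_2^{\sharp}:=(I_2\setminus\{y_i\})\cup\{z\}$. If this is invalid (i.e., $z$ has another neighbor in $I_2$), a secondary swap involving that neighbor provides the red-degree violation needed to augment via Lemma~\ref{maxdegree}, contradicting Condition~\ref{condition-1}. When it is valid, $z$ leaves and $y_i$ enters $V(G')$; if $b'\in V(G')$, the merge-or-augment dichotomy of the $x_i$ case carries over. The delicate subcase is $b'\in I_1$, where both $|I_1|+|I_2|$ and the number of red components are preserved, so the contradiction must be extracted from $H$: after the swap, $y_i$ is at $G$-distance $2$ from both $u_i$ and $u_{i+1}$ through $w_i$, so it acquires $H$-edges to two consecutive vertices of the cycle $S_1$, producing an $H$-component that is neither a tree nor a cycle, colliding with the combination of Lemmas~\ref{propertyH} and~\ref{no.cycle+bar}. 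I expect this last subcase to be the principal obstacle, because a simple size or component count no longer suffices and the contradiction must come from the finer structure of $H$.
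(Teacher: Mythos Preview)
Your approach has real gaps in both halves, and in each case the paper's argument is both shorter and avoids the difficulty you run into.

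For the $x_i$ part, the paper does not swap at all: it simply observes that if some $z\in N(x_i)\setminus\{u_i\}$ were red, then $d_G(z,u_i)\le 2$, hence $zu_i\in E(H)$ and $z$ lies in the \emph{same} $H$-component $S_1$ as $u_i$---impossible, since $S_1$ is exactly the odd cycle on $u_1,\dots,u_k$. Your swap $(I_2\setminus\{x_i\})\cup\{u_i\}$ does work for $i\ge 3$, but your ``the same dichotomy essentially applies'' for $i=1$ is false: moving $u_1$ into $I_2$ turns the red $P_2$ $u_1u_2$ into the red $P_1$ $\{u_2\}$ (one component stays one), while $\{z\}$ and the new red vertex $x_1$ merge into one $P_2$, so the total number of red components is \emph{unchanged}, and you get no contradiction with Condition~\ref{condition-2}.

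For the $y_i$ part, the paper again avoids case analysis: it performs the block shift $I_1:=(I_1\cup\{u_2,\dots,u_i\})\setminus\{w_2,\dots,w_i\}$, which is size- and component-preserving and turns $w_iu_{i+1}$ into a red $P_2$; a single application of Lemma~\ref{diagonal} to this $P_2$ then forces $N(y_i)\setminus\{w_i\}\subseteq I_1$. Your route through $(I_2\setminus\{y_i\})\cup\{z\}$ is not complete. The ``secondary swap'' in the invalid case is never specified. In the valid case with $b'\in I_1$, your plan to invoke Lemmas~\ref{propertyH}/\ref{no.cycle+bar} on the new pair requires that pair to still satisfy Condition~\ref{condition-2}; but if $z$ was the endpoint of a red $P_2$ $zz''$ (which you have not excluded), the swap replaces one component $\{z,z''\}$ by two components $\{z''\}$ and $\{y_i\}$, so the component count \emph{increases} and the structural lemmas no longer apply. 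Thus the branch you flag as the ``principal obstacle'' is not the only hole.

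In short: for $x_i$ use the $H$-component observation; for $y_i$ use the shift-to-a-$P_2$ trick and Lemma~\ref{diagonal}. Both replace your swap/merge bookkeeping with a one-line reduction.
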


\begin{proof}
Since $S_1$ is a component of $H$, for each 
$1 \le i \le k$, vertices in $N(x_i)\setminus \{u_i\}$ cannot be red. Thus, $N(x_i)\setminus \{u_i\} \subseteq I_1$ since $x_i$s are all in $I_2$.
For each $w_iu_{i+1}$ with $2 \le i \le k$, it can become a red $P_2$ via reassigning 
$I_1:=I_1\cup \{u_2,\ldots, u_i\}\setminus \{w_2,\ldots, w_i\}$ and
$I_2:=I_2$. 
Note that this switch operation does not violate Conditions \ref{condition-1} and \ref{condition-2}, but after this operation we can apply Lemma \ref{diagonal} to show that $N(y_i) \setminus \{ w_i\} \subseteq I_1$. 
\end{proof}

Now we lock the position of the vertex $v$. Recall that $v$ and $u$ (actually $u_3$) form a 3-3 conflict.
Let $N(x_3) = \{u_3, z_1, z_2\}$, $N(z_1) = \{x_3, z_3, z_4\}$, and $N(z_2) = \{x_3, z_5, z_6\}$. Since $d_G(u_3,v) \le 3$, 
$v \in \{z_3, z_4, z_5, z_6\}$ by Claim~\ref{2ndneighbour}.
Without loss of generality, assume $v=z_3$.
Next we show at most two of $z_3,z_4,z_5,z_6$ are in $G'$.

\begin{claim}\label{u3structure}
At most two of $z_3,z_4,z_5,z_6$ are in $G'$.
\end{claim}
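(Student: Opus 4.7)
The plan is to argue by contradiction: suppose at least three of $z_3, z_4, z_5, z_6$ lie in $V(G')$, and exhibit an alternative valid pair $(I_1, I_2')$ of disjoint independent sets in which the new auxiliary graph $H' := H_{I_1, I_2'}$ contains a vertex (namely $x_3$) that directly violates Lemma~\ref{propertyH}. Specifically, I would set $I_2' := (I_2 \setminus \{x_3\}) \cup \{u_3\}$, effectively swapping the roles of $u_3$ and $x_3$ between $V(G')$ and $I_2$.

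First, I would check that this swap is legitimate. The neighbors of $u_3$ in $G$ are $w_2, w_3 \in I_1$ together with $x_3$, which has just been removed from $I_2$; hence $I_2'$ is still independent, and $|I_1| + |I_2'| = |I_1| + |I_2|$, so Condition~\ref{condition-1} is preserved. For Condition~\ref{condition-2}, I would observe that $u_3$ is an isolated vertex of the old $G'$ (since $N_G(u_3) \subseteq I_1 \cup I_2$) and that $x_3$ is isolated in the new $G'$, because its $G$-neighbors are $u_3 \in I_2'$ and $z_1, z_2 \in I_1$ (recalling $z_1, z_2 \in I_1$ from Claim~\ref{2ndneighbour}). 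Thus replacing $u_3$ by $x_3$ in $V(G')$ leaves the number of components of $G'$ unchanged, so $(I_1, I_2')$ still attains the minimum in Condition~\ref{condition-2}.

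Second, I would apply Lemma~\ref{propertyH} to $x_3$ in $H'$. The vertices of $G$ at distance at most $2$ from $x_3$ are exactly $u_3, z_1, z_2, w_2, w_3, z_3, z_4, z_5, z_6$; the first five all lie in $I_1 \cup I_2'$, so the $H'$-neighbors of $x_3$ are precisely the red vertices among $z_3, z_4, z_5, z_6$. If all four of these belong to $V(G')$, then $\deg_{H'}(x_3) = 4$, already contradicting $\Delta(H') \le 3$. If exactly three belong to $V(G')$, then $\deg_{H'}(x_3) = 3$, and Lemma~\ref{propertyH} forces $x_3$ to lie on a red $P_2$ in $G$; but in the new configuration $x_3$ has no red $G$-neighbor, so it is a red $P_1$, the desired contradiction.

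The main obstacle I expect is the bookkeeping in the first step: one must check carefully that the single swap $x_3 \leftrightarrow u_3$ preserves both Conditions~\ref{condition-1} and~\ref{condition-2}, which hinges on $u_3$ and $x_3$ each being isolated in the respective graphs $G'$ and new $G'$. Once this is confirmed, the application of Lemma~\ref{propertyH} handles both the ``three red'' and ``four red'' cases uniformly without further case analysis.
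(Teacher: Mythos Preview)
Your proof is correct and takes a genuinely different route from the paper's. The paper argues by direct reassignment in two separate subcases: if all four of $z_3,z_4,z_5,z_6$ are red, it sets $I_1:=I_1\cup\{x_3\}\setminus\{z_1,z_2\}$ and $I_2:=I_2\cup\{z_1,z_2,u_3\}\setminus\{x_3\}$, strictly enlarging $|I_1|+|I_2|$ and contradicting Condition~\ref{condition-1}; if exactly three are red (say $z_3,z_4,z_5$), it sets $I_1:=I_1\cup\{x_3\}\setminus\{z_1,z_2\}$ and $I_2:=I_2\cup\{z_1,u_3\}\setminus\{x_3\}$, keeping $|I_1|+|I_2|$ fixed but dropping the component count of $G'$, contradicting Condition~\ref{condition-2}. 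Your approach instead performs the single swap $I_2':=(I_2\setminus\{x_3\})\cup\{u_3\}$, checks that $(I_1,I_2')$ still realises both extremal conditions, and then applies Lemma~\ref{propertyH} to the new red $P_1$ vertex $x_3$. This is legitimate because the proof of Lemma~\ref{propertyH} uses only Conditions~\ref{condition-1} and~\ref{condition-2}, both of which you preserved. Your argument is more economical---one modification handles the three-red and four-red cases simultaneously, with the structural work outsourced to an already-established lemma---while the paper's argument is self-contained at this step and makes the violated extremal condition explicit in each case.
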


\begin{proof}
If $\{z_3,z_4,z_5,z_6\} \subseteq V(G')$, then
we reassign $I_1:=I_1\cup \{x_3\}\setminus \{z_1,z_2\}$ and 
$I_2:=I_2\cup \{z_1,z_2,u_3\}\setminus \{x_3\}$, which contradicts Condition \ref{condition-1}.
If three of $z_3,z_4,z_5,z_6$ are in $G'$, say, $z_3,z_4,z_5 \in G'$ and $z_6 \in I_2$, then we reassign
$I_1:=I_1\cup \{x_3\}\setminus \{z_1,z_2\}$ and 
$I_2:=I_2\cup \{z_1,u_3\}\setminus \{x_3\}$.
This drops the number of compoments in $H$ by one and thus violates Condition \ref{condition-2}.
Hence, $|\{z_3,z_4,z_5,z_6\} \cap V(G')|\leq 2$.
\end{proof}



According to Claim ~\ref{u3structure}, we only need to consider two cases up to symmetry, i.e., $z_3,z_4 \in G'$ and $z_5,z_6 \in I_2$, or $z_3,z_5 \in G'$ and $z_4,z_6 \in I_2$.

If $z_3,z_4 \in G'$ and $z_5,z_6 \in I_2$, then we claim that $z_3z_4$ itself is a component of $H$, which implies $v\neq z_3$, a contradiction. 
We reassign $I_1:=I_1$ and $I_2:=I_2\cup \{u_3\}\setminus \{x_3\}$. This operation adds $x_3$ to $G'$,
and does not violate Conditions~\ref{condition-1} and~\ref{condition-2}. Now applying Lemma~\ref{noC_1} we come to the required conclusion.

Suppose now that $z_3,z_5 \in G'$ and $z_4,z_6 \in I_2$.
Since $z_3$ ($v$) is a vertex of $S_2$ and it is not part of a $P_2$ in $G'$ according to rule \textbf{(ii)} of the coloring $h$, $N(z_3)\setminus \{z_1\}\subseteq I_1\cup I_2$, and furthermore, each vertex of $N(z_3)\setminus \{z_1\}$ has a neighbor, besides $z_3$, in $G'$ (see Figure~\ref{finalcases} left picture).
In this case, we reassign $I_1:=I_1$ and $I_2:=I_2\cup \{u_3\}\setminus \{x_3\}$. This operation adds $x_3$ to $G'$,
and does not violate Conditions~\ref{condition-1} and~\ref{condition-2}, but forms a configuration $C_2$, contradicting Lemma~\ref{noC_2}.



\begin{figure}[ht]
\vspace{-8mm}
\begin{center}
  \includegraphics[scale=0.52]{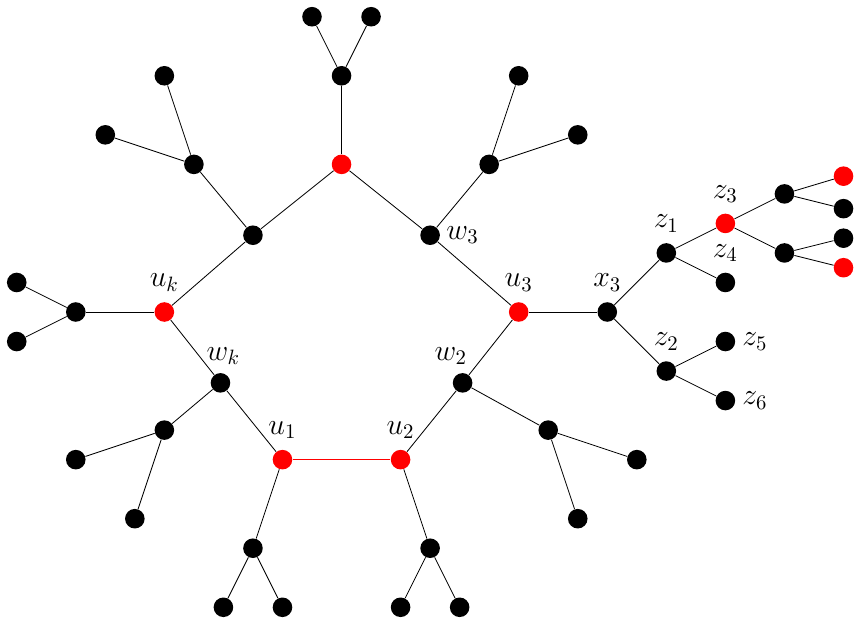} \hspace{3mm}
  \includegraphics[scale=0.52]{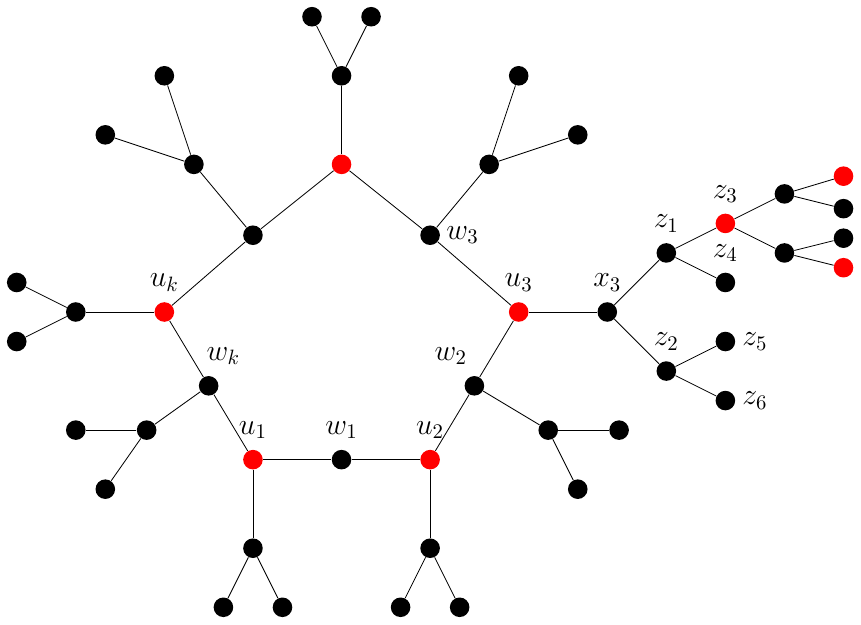} 
\caption{Case 1 and Case 2.}\label{finalcases}
\end{center}
\vspace{-8mm}
\end{figure}

\textbf{Case 2:} The corresponding cycles of both $S_1$ and $S_2$ in $G$ have no red $P_2$. 

The cycle $S_1$ of $H$ corresponds to a cycle $u_1w_1u_2w_2 \ldots u_kw_ku_1$ in $G$, where $u_i \in V(G')$ for $1 \le i \le k$. For $1 \le i \le k$, let $N(u_i) = \{w_i, w_{i-1},x_i\}$ and $N(w_i) = \{u_i, u_{i+1}, y_i\}$. We assume $f(u_3) = 3$, i.e., $u=u_3$, and  $w_1 \in I_1$.


We claim for every $1 \le i \le k$, $w_i \in I_1$ and $x_i,y_i \in I_2$, and there is no chord in the cycle $u_1w_1u_2w_2 \ldots u_kw_ku_1$. 
Suppose not, i.e., there is an $i$ with $w_{i-1} \in I_1$ and $w_i \in I_2$, where $2 \le i \le k$. If $x_i \in I_1$, then we switch $u_i,w_i$ between in $I_2$ and in $G'$. This is a contradiction with Condition~\ref{condition-2}. 
If $x_i \in I_2$, then we switch $u_i, w_{i-1}$ between in $I_1$ and in $G'$. This is again a contradiction with Condition~\ref{condition-2}. 
Therefore, $w_i \in I_1$ and $y_i \in I_2$ for each $1 \le i \le k$. Furthermore, $x_i\in I_2$, where $1 \le i \le k$, since otherwise we add $u_i$ to $I_2$, which violates Condition~\ref{condition-1}. By Lemma~\ref{noC_1}, the cycle $u_1w_1u_2w_2 \ldots u_kw_ku_1$ is chordless.

\begin{claim}\label{2ndneighbour-case2}
$N(x_i) \setminus \{u_i\} \subseteq I_1$ and $N(y_i) \setminus \{w_i\} \subseteq I_1$, where $1 \le i \le k$. 
\end{claim}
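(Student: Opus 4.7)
My plan is to prove Claim~\ref{2ndneighbour-case2} by contradiction in each of its two parts, following the template of Claim~\ref{2ndneighbour} in Case~1 but adapted to Case~2, in which the cycle $S_1$ contains no red $P_2$.

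For the inclusion $N(x_i)\setminus\{u_i\}\subseteq I_1$, I would suppose some $z\in N(x_i)\setminus\{u_i\}$ lies in $V(G')$. The common $G$-neighbor $x_i$ forces $zu_i\in E(H)$, placing $z$ in the $H$-component $S_1$. Since $u_i$ is a red $P_1$, Lemma~\ref{propertyH} gives $\deg_H(u_i)\le 2$ and within $S_1$ its two $H$-neighbors are exactly $u_{i-1}$ and $u_{i+1}$, whence $z\in\{u_{i-1},u_{i+1}\}$. By symmetry take $z=u_{i+1}$, so $x_i=x_{i+1}$. The swap $I_2':=(I_2\setminus\{x_i\})\cup\{u_i\}$, $I_1':=I_1$ preserves independence and $|I_1|+|I_2|$; in the new red graph $x_i$ becomes adjacent to the still-red $u_{i+1}$, fusing the formerly isolated components $\{u_i\}$ and $\{u_{i+1}\}$ into one red $P_2$, which drops the component count of $G'$ by one and contradicts Condition~\ref{condition-2}. (Should $x_i$'s remaining $G$-neighbor also happen to be red, then $x_i$ has degree two in the new red graph, and the argument in the proof of Lemma~\ref{maxdegree} yields a strictly larger independent union, contradicting Condition~\ref{condition-1} instead.)

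For the inclusion $N(y_i)\setminus\{w_i\}\subseteq I_1$, suppose some $z\in N(y_i)\setminus\{w_i\}$ lies in $V(G')$ and let $z'$ denote the third $G$-neighbor of $y_i$; since $y_i\in I_2$ is independent, $z'\in V(G')\cup I_1$. My plan is to execute the label-swap $I_1':=(I_1\setminus\{w_i\})\cup\{y_i\}$, $I_2':=(I_2\setminus\{y_i\})\cup\{w_i\}$, which leaves $V(G')$ (and hence the component decomposition of $G'$) intact while preserving $|I_1|+|I_2|$. Under $(I_1',I_2')$ the triple $w_{i-1}\in I_1'$, $w_i\in I_2'$, $x_i\in I_2'$ reproduces the forbidden pattern ruled out in the Case~2 preamble, and its switch ``$u_i\leftrightarrow w_{i-1}$'' (moving $u_i$ into $I_1'$ and $w_{i-1}$ into $V(G')$) creates the red $P_2$ $w_{i-1}u_{i-1}$ while destroying the red $P_1$ $\{u_i\}$, strictly decreasing the number of components of the red graph and contradicting Condition~\ref{condition-2}.

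The principal obstacle is the sub-case $z'\in I_1$, in which the naive label-swap would leave $y_i$ adjacent to $z'\in I_1'$ and violate independence. I would repair this via the coordinated triple swap $I_1':=(I_1\setminus\{w_i,z'\})\cup\{y_i\}$, $I_2':=(I_2\setminus\{y_i\})\cup\{w_i,z'\}$, which again preserves both $|I_1|+|I_2|$ and $V(G')$; validity reduces to checking that neither of $z'$'s two non-$y_i$ neighbors already lies in $I_2$ and that $z'\not\sim w_i$. The remaining degenerate overlaps, in particular $z=u_{i\pm 1}$ (which forces $y_i=x_{i\pm 1}$) and $z=u_i$ (which forces the triangle $y_i=x_i$), I expect to eliminate by invoking Part~1 of the present claim applied to $x_{i\pm 1}$ or $x_i$, together with Lemma~\ref{noC_1} or Lemma~\ref{noC_2} on the post-swap configuration; this case analysis around $z'$ is where I expect the bulk of the technical work to lie.
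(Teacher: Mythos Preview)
Your Part~1 argument is correct but circuitous. The paper dispatches it in one line via Lemma~\ref{noC_2}: the red $P_1$ $u_i$ is already joined to the red $P_1$s $u_{i-1}$ and $u_{i+1}$ through $w_{i-1},w_i\in I_1\cup I_2$, so a third red neighbour of $u_i$ through $x_i$ (which, as you note, must lie in $S_1$ and hence be a red $P_1$) would produce configuration $C_2$. There is no need to identify $z$ with $u_{i\pm 1}$ or to perform any swap.

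Your Part~2 plan has a genuine gap, and you have correctly located it: the sub-case $z'\in I_1$. The triple swap you propose requires that neither non-$y_i$ neighbour of $z'$ lies in $I_2$, but since $z'\in I_1$, its neighbours live in $V(G')\cup I_2$ and there is no mechanism in your argument to exclude an $I_2$-neighbour. You would be forced into an unbounded cascade of further swaps with no termination argument. The degenerate overlaps $z\in\{u_{i-1},u_i,u_{i+1}\}$ (equivalently $y_i\in\{x_{i-1},x_i,x_{i+1}\}$) are also not fully handled; invoking Part~1 on $x_{i\pm 1}$ tells you the \emph{other} neighbour of $y_i$ is in $I_1$, which lands you squarely back in the problematic $z'\in I_1$ case rather than resolving it.

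The paper avoids all of this with a single global move: simultaneously set $I_1:=I_1\cup\{u_1,\ldots,u_k\}\setminus\{w_1,\ldots,w_k\}$, keeping $I_2$ fixed. Each $w_j$ becomes an isolated red $P_1$ (its neighbours $u_j,u_{j+1}$ are now in $I_1$ and $y_j\in I_2$), so both Conditions~\ref{condition-1} and~\ref{condition-2} are preserved. The roles of $(u_j,w_j,x_j)$ and $(w_j,u_j,y_j)$ are now interchanged, and Lemma~\ref{noC_2} applied to the red $P_1$ $w_i$ (with $w_{i-1},w_{i+1}$ already attached via $u_i,u_{i+1}$) forbids a red neighbour of $y_i$ other than $w_i$. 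The key idea you are missing is that a \emph{global} relabelling of the whole cycle symmetrises the problem and reduces Part~2 to Part~1, whereas your local swaps break the cycle structure and generate side conditions you cannot control.
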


\begin{proof}
By Lemma~\ref{noC_2}, for each $1 \le i \le k$ we have $N(x_i) \setminus \{u_i\} \subseteq I_1$. Moreover, we reassign
$I_1:=I_1\cup \{u_1,\ldots, u_k\}\setminus \{w_1,\ldots, w_k\}$ and
$I_2:=I_2$. This operation does not violate Conditions~\ref{condition-1} and~\ref{condition-2}, but now we can apply  Lemma~\ref{noC_2} to conclude that
$N(y_i) \setminus \{w_i\} \subseteq I_1$ for each $1 \le i \le k$. 
\end{proof}


Recall that $v$ and $u$ (actually $u_3$) form a 3-3 conflict. Let $N(x_3) = \{u_3, z_1, z_2\}$, $N(z_1) = \{x_3, z_3, z_4\}$, and $N(z_2) = \{x_3, z_5, z_6\}$  (see Figure~\ref{finalcases} right picture).
Since $d_G(u_3,v) \le 3$, 
$v \in \{z_3, z_4, z_5, z_6\}$ by Claim \ref{2ndneighbour-case2}.
Applying almost the same proof with Case 1 starting from Claim~\ref{u3structure}, we complete the proof.





\section{Concluding Remarks}

The study of packing $(1,1,2,2)$-coloring of subcubic graphs is a crucial approach to prove Conjecture~\ref{conj1}. We observe that a packing $(1,\ldots,1,2,\ldots, 2)$-coloring can be viewed as an intermediate coloring between a proper coloring (packing $(1, \ldots, 1)$-coloring) and a square coloring (packing $(2, \ldots, 2)$-coloring). Methods used in proving results of proper coloring and square coloring (e.g., see~\cite{FKL1, HM1, T1}) can be useful.

We feel one might approach the problem "every subcubic graph except the Petersen graph has a packing $(1,1,2,2)$-coloring" by providing a more detailed analysis on the odd cycle components.
Furthermore, adding a condition regarding the odd cycles in $H$, such as "the number of odd cycles in $H$ is minimized", maybe helpful. 
We conclude this paper by posting the following conjecture.

\begin{conj}
Every subcubic graph except the Petersen graph has a packing $(1,1,2,2)$-coloring.
\end{conj}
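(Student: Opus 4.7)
The plan is to adapt the proof of Theorem~\ref{mainresult} by imposing a third optimization condition on $I_1,I_2$, and then show that the color $3$ becomes superfluous unless $G$ is the Petersen graph. Assuming $G$ is cubic and connected, choose $I_1, I_2$ subject to Conditions~\ref{condition-1} and~\ref{condition-2}, and in addition
\begin{equation}\label{condition-3-proposal}
\text{the number of odd cycle components of } H_{I_1,I_2} \text{ is minimum.}
\end{equation}
Since Lemmas~\ref{maxdegree}--\ref{no.cycle+bar} only use Conditions~\ref{condition-1} and~\ref{condition-2}, each component of $H$ is still a tree, an even cycle, or an odd cycle. If $H$ has no odd cycle component, then $H$ is bipartite, and any proper $2$-coloring of $H$ combined with $I_1,I_2$ yields a packing $(1,1,2,2)$-coloring. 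Hence it suffices to prove that, when $G$ is not the Petersen graph, no odd cycle component of $H$ can survive.

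To that end, suppose an odd cycle component $S$ of $H$ exists and rerun the analyses of Case~1 and Case~2 in the proof of Theorem~\ref{mainresult}: they already yield the alternating $I_1/I_2$ pattern around the cycle of $G$ corresponding to $S$, chordlessness of that cycle, and (by Claims~\ref{2ndneighbour} and~\ref{2ndneighbour-case2}) the placement of the entire second neighborhood of $S$ into $I_1$. For each admissible local configuration I would search for a reassignment of the form $I_1 := I_1 \cup A \setminus B$, $I_2 := I_2 \cup C \setminus D$ that preserves $|I_1|+|I_2|$ and the number of components of $G'$ but strictly decreases the number of odd cycle components of $H$, contradicting~\ref{condition-3-proposal}. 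The triangle case $|S|=3$ and the $5$-cycle case are expected to be the most delicate and to draw heavily on the structural lemmas already established.

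The hard part will be proving a rigidity statement of the form: if no such reassignment exists then $G$ is isomorphic to the Petersen graph. For the Petersen graph itself one has $|I_1|+|I_2|=7$ and $|V(G')|=3$, and since any two vertices of the Petersen graph lie at distance at most $2$, the three red vertices form a triangle in $H$ -- precisely the obstruction that cannot be destroyed. To force this global identification from a local switching failure, one will likely need to exploit the particular combinatorics of the Petersen graph (girth $5$, the unique-common-neighbor property) and possibly to enrich the optimization further, as suggested by the concluding remarks of the paper; a discharging argument on $G$ that converts the persistence of an odd cycle component in $H$ into the identification of a Petersen subgraph of $G$ seems a plausible route when the purely local switching argument breaks down.
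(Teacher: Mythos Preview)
The statement you are attempting to prove is a \emph{conjecture}: the paper explicitly poses it as an open problem in the concluding remarks and offers no proof. There is therefore no ``paper's own proof'' to compare against. What the paper does provide is the suggestion that one might add a third optimization condition minimizing the number of odd cycle components of $H$ --- which is precisely the condition~\eqref{condition-3-proposal} you introduce. So your proposal is not a proof but a restatement of the paper's own proposed line of attack, fleshed out slightly.

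As a proof, the proposal has a genuine and acknowledged gap. Everything up to ``it suffices to prove that \ldots\ no odd cycle component of $H$ can survive'' is correct and routine; the structural lemmas indeed only use Conditions~\ref{condition-1} and~\ref{condition-2}, so they remain valid after imposing your Condition~\eqref{condition-3-proposal}. But the entire content of the conjecture lies in the step you label ``the hard part'': showing that whenever an odd cycle component persists under all three optimality conditions, $G$ must be the Petersen graph. You do not carry this out; you only say you ``would search for a reassignment'' and that a discharging argument ``seems a plausible route.'' There is no actual switching scheme exhibited, no case analysis for $|S|=3$ or $|S|=5$, and no mechanism explained by which a local failure to switch forces the global Petersen structure. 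The Petersen identification is not a detail to be filled in later --- it is the whole problem, and nothing in the existing lemmas (which are all local) gives any leverage toward a global rigidity conclusion. Until that step is supplied, this remains a research outline rather than a proof.
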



\end{document}